\documentclass[11pt,oneside]{article}

\usepackage[T1]{fontenc}
\usepackage[utf8]{inputenc}
\usepackage{amsmath,amssymb,amsthm,mathtools}
\usepackage{enumitem}
\usepackage[colorlinks=true,linkcolor=blue,citecolor=blue,urlcolor=blue]{hyperref}

\newtheorem{theorem}{Theorem}[section]
\newtheorem{proposition}[theorem]{Proposition}
\newtheorem{lemma}[theorem]{Lemma}
\newtheorem{corollary}[theorem]{Corollary}
\newtheorem{definition}[theorem]{Definition}

\newcommand{\R}{\mathbb R}
\newcommand{\dd}{\,d}
\newcommand{\DN}{\Lambda}
\newcommand{\Sym}{\operatorname{Sym}}
\newcommand{\diver}{\operatorname{div}}
\newcommand{\supp}{\operatorname{supp}}
\newcommand{\Id}{\operatorname{Id}}
\newcommand{\eps}{\varepsilon}

\title{The Calder\'on problem for piecewise polynomial anisotropic conductivities on partitions with many flat faces}
\author{C\u{a}t\u{a}lin I. C\^{a}rstea\thanks{
Department of Applied Mathematics,
National Yang Ming Chiao Tung University,
Hsinchu 300, Taiwan, R.O.C.,
\texttt{catalin.carstea@gmail.com}}}
\date{}

\begin{document}
\maketitle

\begin{abstract}
We prove uniqueness for a finite-dimensional anisotropic Calder\'on problem in
dimensions \(n\ge 3\).  The unknown conductivity is a symmetric uniformly
elliptic matrix field whose restriction to each cell of a known finite
partition is polynomial; jumps across cell interfaces are allowed.  We assume
that the cells can be ordered so that each successive cell is accessible
through sufficiently many flat faces.  Under this assumption, the local
Dirichlet-to-Neumann map on an exterior boundary patch determines every
polynomial piece.  The proof first recovers one polynomial matrix from its
boundary invariants on several flat faces and then propagates the required
local data through the recovered cells by unique continuation and Runge
approximation.  A general finite-dimensional analytic stability theorem also
gives H\"older stability on compact admissible parameter sets.
\end{abstract}


\section{Introduction}

Let \(\Omega\subset \R^n\), \(n\ge 3\), be a bounded domain and let
\(\sigma(x)\) be a symmetric positive definite matrix field.  We study the
inverse problem of determining \(\sigma\) in the conductivity equation
\begin{equation}
  \diver(\sigma \nabla u)=0\quad\text{in }\Omega,
  \label{eq:conductivity-equation}
\end{equation}
from a local Dirichlet-to-Neumann (DN) map.

For anisotropic conductivities, boundary-fixing diffeomorphisms give the
fundamental obstruction to uniqueness.  If
\(F:\overline\Omega\to\overline\Omega\) is a diffeomorphism satisfying
\(F|_{\partial\Omega}=\Id\), then the push-forward conductivity
\begin{equation}
  (F_*\sigma)(y)=
  \frac{DF(x)\sigma(x)DF(x)^T}{|\det DF(x)|},
  \qquad x=F^{-1}(y),
  \label{eq:gauge}
\end{equation}
has the same full boundary measurements as \(\sigma\); see, for example,
\cite{LassasUhlmann2001,LeeUhlmann1989,Sylvester1990}.  Our aim is to remove
this ambiguity by restricting the conductivity to a finite-dimensional class.

Polynomial dependence alone does not remove the gauge.  To see this, let
\(\Omega=B(0,1)\subset \R^n\) and let \(A\) be a nonzero skew-symmetric
matrix, for example the generator of rotations in the \((x_1,x_2)\)-plane.
Set \(R_\theta=\exp(\theta A)\) and
\begin{equation}
  F_\eps(x)=R_{\eps(1-|x|^2)}x.
  \label{eq:polynomial-gauge-example-map}
\end{equation}
The map \(F_\eps\) is a diffeomorphism of the ball onto itself and is the
identity on \(\partial B(0,1)\).  Moreover, \(Ax\cdot x=0\), so its Jacobian
determinant is one.  The push-forward of the constant conductivity \(I\) is
\begin{equation}
  (F_{\eps*}I)(y)
  =B_\eps(y)B_\eps(y)^T,\qquad
  B_\eps(y)=I-2\eps (Ay)\otimes y.
  \label{eq:polynomial-gauge-example-conductivity}
\end{equation}
For small nonzero \(\eps\), the conductivities \(I\) and \(F_{\eps*}I\) are
distinct, uniformly elliptic, and polynomial, yet their boundary measurements
coincide.

We therefore impose both a known finite partition and polynomial dependence on
each cell.  If \(D_\alpha\) is a cell, then
\begin{equation}
  \sigma|_{D_\alpha}=P_\alpha|_{D_\alpha},
  \qquad
  P_\alpha\in \operatorname{Poly}_{\le N_\alpha}
       (\R^n;\Sym_n).
  \label{eq:piecewise-polynomial}
\end{equation}
The partition and degree bounds are known, whereas the coefficients of the
matrix polynomials \(P_\alpha\) are not.  The pieces need not agree across cell
interfaces.  We ask whether all of them are determined by the DN map on one
relatively open subset of \(\partial\Omega\).

Calder\'on formulated the scalar problem in \cite{Calderon1980}.  For smooth
scalar conductivities in dimensions \(n\ge3\), uniqueness and reconstruction
were established in \cite{SylvesterUhlmann1987,Nachman1988}.  The
two-dimensional problem for nonsmooth and bounded measurable conductivities is
treated in \cite{BrownUhlmann1997,AstalaPaivarinta2006}; see
\cite{Uhlmann2009} for a survey and its connection with electrical impedance
tomography.

Discontinuous conductivities with finitely many parameters have also been
studied extensively.  Analytic structural assumptions and
uniqueness for discontinuous conductivities were studied in
\cite{KohnVogelius1984,KohnVogelius1985} and \cite{Isakov1988}, while Lipschitz stability
for isotropic piecewise constant conductivities on known partitions was proved
in \cite{AlessandriniVessella2005}.  Results for conformally anisotropic
classes, complex admittivities, scalar piecewise linear conductivities, and
fully anisotropic piecewise constant tensors on known partitions include
\cite{BerettaFrancini2011,GaburroSincich2015} and
\cite{AlessandriniDeHoopGaburroSincich2016,AlessandriniDeHoopGaburro2017}.
Related finite-dimensional results for piecewise constant Schr\"odinger
potentials, Helmholtz coefficients, and Lam\'e parameters include
\cite{BerettaDeHoopQiu2012}, \cite{BerettaDeHoopQiuScherzer2014}, and
\cite{BerettaFranciniVessella2013};
see \cite{Garde2020} for layered piecewise constant conductivities.  For
piecewise constant coefficients in elasticity, see
\cite{CarsteaHondaNakamura2018} and \cite{CarsteaNakamuraOksanen2019}.
The inner-extension step below is closest to the known-region continuation
argument of \cite{Ikehata2002}; it was also used in the two elasticity papers
just cited.

The only boundary determination result needed here is the principal-symbol
calculation for the weak anisotropic conductivity DN form.  Relative to the
fixed Euclidean surface density on a flat face, that symbol determines the
conormal Schur form introduced below; we use the local result of
\cite{KangYun2003}.

The first part of the proof is local.  On a flat face with conormal \(\nu\),
boundary determination gives a scaled Schur-complement invariant of \(\sigma\),
which we call the conormal Schur form.  At a point, the Schur forms associated
with three independent conormals determine a positive definite matrix.  For a
matrix polynomial of degree at most \(N\), data on \(N+3\) suitably generic
flat faces then determine the whole polynomial by a divisibility argument.  We
expect that \(\max(3,N+2)\) faces suffice, but use \(N+3\) here because the
resulting argument is considerably simpler.

The second part is a layer-stripping argument.  Once several cells have been
recovered, their union is a known region between the measured boundary and the
remaining cells.  Unique continuation and Runge approximation through this
region determine the local DN map on the newly accessible internal interface.
The main text states this inner-extension result and sketches its proof; the
trace-space and approximation details are given in the appendix.

Accordingly, each recovery step requires enough accessible flat faces, a
genericity condition on their supporting hyperplanes and triple intersections,
and a face-connected chain through the recovered cells to the measured boundary
patch.  These assumptions are adapted to polyhedral and mesh-type models, where
the partition and its flat faces are known.

We next make the geometric assumptions precise and state the main theorem.  Its
proof is given in Section \ref{sec:global-uniqueness}.

\subsection{Assumptions and notation}
\label{sec:local-dn}

Let \(U\subset\R^n\) be a bounded Lipschitz domain and let
\(\Gamma\subset\partial U\) be relatively open.  Remove from \(\Gamma\) the
lower-dimensional edges of the flat subdivision, and denote the resulting
relative interior by \(\Gamma^\circ\).  Throughout the paper we set
\begin{equation}
  H^{1/2}_0(\Gamma;\partial U)
  =\overline{C_c^\infty(\Gamma^\circ)}^{\,H^{1/2}(\partial U)}
\end{equation}
and let \(H^{-1/2}(\Gamma;\partial U)\) be its dual under the usual
\(H^{-1/2}\)-\(H^{1/2}\) pairing.  We suppress \(\partial U\) when the ambient
  boundary is clear.  Thus the initial traces are smooth and compactly supported
  away from the edges, and the full spaces are obtained by completion.  For an
  internal face, Proposition \ref{prop:inner-extension} is proved first for such
  smooth generators on one selected patch and then extended in this ordinary
  local trace norm; no completion over the full multi-face interface is used.

If \(\sigma\in L^\infty(U;\Sym_n)\) is uniformly elliptic, the local
Dirichlet-to-Neumann map on \(\Gamma\) is the bounded operator
\begin{equation}
  \DN^U_{\sigma,\Gamma}:H^{1/2}_0(\Gamma;\partial U)
     \to H^{-1/2}(\Gamma;\partial U)
\end{equation}
defined by
\begin{equation}
  \langle \DN^U_{\sigma,\Gamma}f,h\rangle
  =\int_U \sigma\nabla u_f\cdot \nabla v_h\,\dd x,
  \label{eq:local-dn}
\end{equation}
where \(u_f\in H^1(U)\) is the solution of
\(\diver(\sigma\nabla u_f)=0\) with boundary trace \(f\), and
\(v_h\in H^1(U)\) is any extension of \(h\).  The integral does not depend on
the choice of extension.

By a Lipschitz domain we mean a connected open set with strong Lipschitz
boundary: near each boundary point, after a rigid motion, the domain lies on
one side of a Lipschitz graph.  This convention applies to \(\Omega\), the
cells \(D_\alpha\), and the recovered and remaining domains introduced below;
boundedness will always be stated separately.

We first specify the allowed partitions.  For distinct cells set
\begin{equation}
  I_{\alpha\beta}
  =\Omega\cap\partial D_\alpha\cap\partial D_\beta .
  \label{eq:clipped-cell-interface}
\end{equation}
We say that the subdivision is \emph{internally flat up to a skeleton} if there
is a closed set \(E_{\mathcal D}\subset\overline\Omega\), contained in a finite
union of Lipschitz images of affine sets of dimension at most \(n-2\), with the
following property.  The set \(E_{\mathcal D}\cap I_{\alpha\beta}\) contains no
nonempty relatively open subset of any nonempty interface
\(I_{\alpha\beta}\).  Moreover, if \(x\) belongs to the relative interior of
\(I_{\alpha\beta}\setminus E_{\mathcal D}\), then there are a ball \(B_x\) and
an affine hyperplane \(H_x\) such that
\begin{equation}
  I_{\alpha\beta}\cap B_x\subset H_x,
\end{equation}
and \(D_\alpha\) and \(D_\beta\) occupy opposite one-sided neighborhoods of
\(H_x\) in \(B_x\).  Selected exterior patches in a recovery order are
separately required to be planar and disjoint from the skeleton.  No condition
is imposed on unused portions of \(\partial\Omega\), which may be curved.

\begin{definition}
\label{def:piecewise-poly-class}
Let \(\mathcal D=\{D_\alpha\}_{\alpha\in A}\) be a finite family of pairwise
disjoint connected Lipschitz domains such that
\begin{equation}
  \overline\Omega=\bigcup_{\alpha\in A}\overline{D_\alpha}.
\end{equation}
Given integers \(N_\alpha\ge0\), we say that \(\sigma\) belongs to
\(\mathcal P(\mathcal D,\{N_\alpha\})\) if there are symmetric matrix
polynomials
\begin{equation}
  P_\alpha\in \operatorname{Poly}_{\le N_\alpha}(\R^n;\Sym_n)
\end{equation}
such that \(\sigma=P_\alpha\) a.e. in \(D_\alpha\), and \(\sigma\) is uniformly
elliptic in \(\Omega\).
If all degree bounds equal the same integer \(N\), we abbreviate this class by
\(\mathcal P(\mathcal D,N)\).
\end{definition}

Only the coefficients of the polynomials \(P_\alpha\) are unknown.  Across an
accessible interface, boundary determination is applied from the side of the
cell being recovered, where the conductivity agrees with one polynomial.

The flat faces must satisfy the following genericity condition.

\begin{definition}
\label{def:N-generic}
Let \(N\ge0\).  A finite family \(\{H_i\}_{i=1}^M\) of distinct affine
hyperplanes in \(\R^n\) is called \(N\)-generic if, for each \(i\), there is a
set
\begin{equation}
  J_i\subset \{1,\ldots,M\}\setminus\{i\},
  \qquad |J_i|=N+2,
\end{equation}
such that:
\begin{enumerate}[label=(\alph*)]
  \item whenever \(j,k\in J_i\), \(j\ne k\), the normals of
  \(H_i,H_j,H_k\) are linearly independent;
  \item inside \(H_i\), the induced hyperplanes
  \(H_i\cap H_j\), \(j\in J_i\), form a simple affine arrangement: pairwise
  intersections have codimension two in \(H_i\), and no pairwise intersection
  is contained in a third induced hyperplane.
\end{enumerate}
For \(N=0\) the condition reads: \(|J_i|=2\), the three normals indexed by
\(i\) and \(J_i\) are linearly independent, and condition (b) reduces to the
codimension-two requirement.
\end{definition}

\subsection{Admissible flat-face recovery orders}
\label{sec:admissible-order}

An admissible order must provide enough generic faces for Theorem
\ref{thm:flat-cell-recovery} and must keep the recovered region
face-connected to the measured boundary, as required by Lemma
\ref{lem:runge-known-shell} and Proposition \ref{prop:inner-extension}.

Let \(A\) be the finite index set of cells and let
\(\alpha_1,\ldots,\alpha_L\) be an ordering of \(A\).  Define the remaining
domain at step \(r\) by
\begin{equation}
  \Omega_r=\operatorname{int}\left(
  \bigcup_{s=r}^L \overline{D_{\alpha_s}}\right),
  \qquad r=1,\ldots,L,
  \label{eq:remaining-domain}
\end{equation}
Thus \(\Omega_1=\Omega\), and \(\Omega_{r+1}\) is obtained by removing
\(D_{\alpha_r}\) from \(\Omega_r\).  For \(r>1\), set
\begin{equation}
  G_r=\Omega\setminus\overline{\Omega_r}.
\end{equation}
so that \(G_r\) is the region recovered before step \(r\).

For a polyhedral mesh, the first cell is recovered from exterior faces in the
measured set.  Thereafter a cell is recoverable when enough of its planar faces
on the boundary of the remaining domain are accessible in generic position.
Such a face may lie on the exterior boundary or on the interface with the
recovered region.  Only relatively open face patches count; lower-dimensional
contacts do not.

\begin{definition}
\label{def:stripping-order}
Let \(\Sigma\subset\partial\Omega\) be the initial measured set.  The ordering
\(\alpha_1,\ldots,\alpha_L\) is called an admissible flat-face recovery order
relative to \(\Sigma\) and the degree bounds \(N_\alpha\) if the following
conditions hold for each \(r\):
\begin{enumerate}[label=(\roman*)]
  \item \(\Omega_r\) is a Lipschitz domain.  If \(r>1\), then the already
  recovered region \(G_r\) is a nonempty face-connected Lipschitz domain whose
  boundary contains a nonempty relatively open flat patch
  \(\Sigma_r\subset\Sigma\).  This patch contains a relatively compact subpatch
  with an auxiliary exterior half-ball: there is a ball \(B_r\) such that, after
  an affine change of coordinates in \(B_r\),
  \begin{equation}
    \Omega\cap B_r=\{x_n>0\}\cap B_r,
    \qquad
    B_r\cap\partial\Omega=\{x_n=0\}\cap B_r\Subset\Sigma_r .
  \end{equation}
  Face-connectedness is understood with respect to the recovered cells: each
  cell in \(G_r\) can be joined to the cell adjacent to \(\Sigma_r\) by a chain
  of recovered cells meeting along nonempty relatively open flat faces.
  \item The cell \(D_{\alpha_r}\) has flat accessible patches
  \begin{equation}
    \Gamma_{r,i}\subset \partial D_{\alpha_r}\cap\partial\Omega_r,
    \qquad i=1,\ldots,M_r,
  \end{equation}
  contained in relative interiors of flat faces and disjoint from
  \(E_{\mathcal D}\).  For \(r=1\), all these
  patches lie in the measured set \(\Sigma\).  For \(r>1\), each patch
  is either contained in \(\Sigma\) or lies in
  \(\Omega\cap\partial G_r\cap\partial\Omega_r\), that is, on the part of the
  interface with the recovered region lying inside the ambient domain.  The
  intersection with \(\Omega\) excludes any common outer-boundary rim.
  \item \(M_r\ge N_{\alpha_r}+3\), and the supporting hyperplanes
  \(H_{r,1},\ldots,H_{r,M_r}\) of the accessible patches are
  \(N_{\alpha_r}\)-generic.  We fix sets \(J_{r,i}\), \(|J_{r,i}|=N_{\alpha_r}+2\),
  witnessing Definition \ref{def:N-generic}.
  \item For each \(i\) and each pair of distinct indices \(j,k\in J_{r,i}\), there
  is a bounded nonempty relatively open set with compact closure
  \begin{equation}
    W_{r,ijk}\Subset H_{r,i}\cap H_{r,j}\cap H_{r,k}.
  \end{equation}
  The local recovery argument compares the three corresponding Schur forms on
  these sets.  When \(n=3\), the triple intersection is zero-dimensional and
  \(W_{r,ijk}\) may be its single point.
  \item For \(r>1\), put
  \begin{equation}
    \Xi_r=\Omega\cap\partial G_r\cap\partial\Omega_r,
  \end{equation}
  the full interface between the recovered and remaining regions.  Every
  accessible patch not contained in \(\Sigma\) lies in \(\Xi_r\), which is a
  finite union of relatively open flat patches up to lower-dimensional edges
  and corners.
\end{enumerate}
\end{definition}

The remaining domains appear explicitly because inner extension is applied to
the connected recovered region, rather than merely to the adjacency graph of
the partition.  Its face-connectedness and piecewise polynomial coefficient
are precisely the hypotheses used for the Runge approximation in Lemma
\ref{lem:runge-known-shell}.

The hypotheses are realized, for example, by a polyhedral shell.  Let
\(D_2\Subset\Omega\) be a polytope and let
\(D_1=\Omega\setminus\overline{D_2}\), where \(\Omega\) is a larger polytope
chosen so that \(D_1\) is connected.  Recover \(D_1\) first from selected outer
faces contained in \(\Sigma\), and then recover \(D_2\) from selected faces in
\begin{equation}
  \Xi_2=\Omega\cap\partial D_1\cap\partial D_2=\partial D_2.
\end{equation}
Choose the supporting hyperplanes in general position and take
\(J_{r,i}\) from the other selected faces.  Degrees \(0,1,2\) require at least
\(3,4,5\) selected faces, respectively.  The sets \(W_{r,ijk}\) may be small
bounded neighborhoods inside the corresponding triple affine intersections
(singletons in dimension three).  This example also illustrates why the
ambient clipping in the definition of \(\Xi_r\) matters when an inner cell
meets the outer boundary: a common outer-boundary rim is not an internal face.

\subsection{Main result}

\begin{theorem}
\label{thm:main}
Let \(n\ge3\), let \(\Omega\subset\R^n\) be a bounded Lipschitz domain, and let
\(\mathcal D=\{D_\alpha\}_{\alpha\in A}\) be a known finite Lipschitz
subdivision of \(\Omega\) that is internally flat up to a skeleton.  Let
\(N_\alpha\ge0\).  Suppose that
\(\sigma^{(1)},\sigma^{(2)}\in\mathcal P(\mathcal D,\{N_\alpha\})\), and assume
that, at every recovery step and for \(j=1,2\), the polynomial extension of
\(\sigma^{(j)}|_{D_{\alpha_r}}\) is positive definite on the triple-intersection
sets \(W_{r,ijk}\) appearing in Definition \ref{def:stripping-order}.  We also
  assume that, for \(j=1,2\), the polynomial extensions in the recovered cells
  adjacent to the auxiliary exterior half-balls remain elliptic in those
  half-balls whenever the half-balls are used.

Let \(\Sigma\subset\partial\Omega\) be a nonempty relatively open measured set.
Assume that the subdivision admits an admissible flat-face recovery order
relative to \(\Sigma\), in the sense of Definition \ref{def:stripping-order}.
If
\begin{equation}
  \DN^\Omega_{\sigma^{(1)},\Sigma}
  =\DN^\Omega_{\sigma^{(2)},\Sigma},
  \label{eq:outer-dn-equality}
\end{equation}
then
\begin{equation}
  \sigma^{(1)}=\sigma^{(2)}\quad\text{in }\Omega.
\end{equation}
Equivalently, the polynomial matrices defining the two conductivities agree on
every cell of the subdivision.

\end{theorem}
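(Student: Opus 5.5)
We argue by induction on the recovery step \(r=1,\ldots,L\) of the admissible order in Definition \ref{def:stripping-order}. The inductive hypothesis at step \(r\) is that \(P^{(1)}_{\alpha_s}=P^{(2)}_{\alpha_s}\) for all \(s<r\), so that \(\sigma^{(1)}=\sigma^{(2)}\) on the recovered region \(G_r\). We will show that the local DN maps of \(\sigma^{(1)}|_{\Omega_r}\) and \(\sigma^{(2)}|_{\Omega_r}\) then agree on every accessible patch \(\Gamma_{r,i}\). From this we recover \(P_{\alpha_r}\) by the flat-face algebraic result, Theorem \ref{thm:flat-cell-recovery}.

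\emph{Step 1: local DN data on each accessible patch.} For \(r=1\), every \(\Gamma_{1,i}\) lies in \(\Sigma\). Restricting \eqref{eq:outer-dn-equality} to traces supported in \(\Gamma_{1,i}\) gives equality of \(\DN^\Omega_{\sigma^{(j)},\Gamma_{1,i}}\), and near \(\Gamma_{1,i}\) the conductivity is \(P^{(j)}_{\alpha_1}\). For \(r>1\), patches contained in \(\Sigma\) are handled the same way, after noting that \(\Sigma\cap\partial\Omega_r\) is also part of \(\partial\Omega_r\). For patches in \(\Xi_r\) we invoke the inner-extension result, Proposition \ref{prop:inner-extension}. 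Its hypotheses are all supplied by the definition and the theorem: \(G_r\) is a face-connected Lipschitz domain on which the two conductivities coincide and are piecewise polynomial, \(G_r\) touches \(\Sigma_r\subset\Sigma\) with an auxiliary exterior half-ball \(B_r\), and the polynomial extensions stay elliptic in \(B_r\). The proposition then gives \(\DN^{\Omega_r}_{\sigma^{(1)},\Gamma_{r,i}}=\DN^{\Omega_r}_{\sigma^{(2)},\Gamma_{r,i}}\). Its proof uses unique continuation across the known region, together with Runge approximation via Lemma \ref{lem:runge-known-shell}, to realize traces on \(\Xi_r\) from solutions with data on \(\Sigma\).

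\emph{Step 2: boundary determination on flat faces.} On each \(\Gamma_{r,i}\), with conormal \(\nu_i\), the conductivity near the patch from the side of \(D_{\alpha_r}\) equals the polynomial \(P^{(j)}_{\alpha_r}\), which is continuous up to the face. The local boundary determination of \cite{KangYun2003}, via the principal symbol of the DN map relative to Euclidean surface measure, therefore shows that the conormal Schur forms of \(P^{(1)}_{\alpha_r}\) and \(P^{(2)}_{\alpha_r}\) with respect to \(\nu_i\) agree on \(\Gamma_{r,i}\). Both forms are rational in \(x\), so the identity extends to the whole hyperplane \(H_{r,i}\) wherever the denominators do not vanish.

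\emph{Step 3: algebraic recovery.} Conditions (iii)--(iv) of the order, together with positivity of the polynomial extensions on the sets \(W_{r,ijk}\), are exactly the hypotheses of Theorem \ref{thm:flat-cell-recovery}. Applying it with \(M_r\ge N_{\alpha_r}+3\) generic faces yields \(P^{(1)}_{\alpha_r}=P^{(2)}_{\alpha_r}\), which closes the induction. After step \(L\), all polynomial pieces agree, so \(\sigma^{(1)}=\sigma^{(2)}\) a.e.

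The main obstacle is Step 1 for internal patches. One must check that Proposition \ref{prop:inner-extension} applies even though the interface \(\Xi_r\) is only piecewise flat, has edges, and may meet \(\partial\Omega\). One must also check that equality on smooth generators supported on one selected patch extends to \(H^{1/2}_0(\Gamma_{r,i})\), and that this local DN data is all that Kang--Yun's local argument requires. I would first verify the ellipticity of the half-ball extensions and the face-connectedness hypotheses, and then apply the proposition patch by patch.
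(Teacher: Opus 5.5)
Your proposal is correct and follows the paper's proof essentially step for step. The paper also argues by induction along the recovery order, restricts the outer DN identity on exterior patches, and applies Proposition~\ref{prop:inner-extension} on internal patches with \(\Omega_1=\Omega\), \(\Omega_2=\Omega_r\), \(\Gamma_1=\Sigma_r\), and a fixed elliptic polynomial extension \(\sigma^0_r\) of the recovered coefficient to the remaining cells. It then uses Lemma~\ref{lem:face-data-from-local-dn} to get the Schur forms and Theorem~\ref{thm:flat-cell-recovery} to close each step.

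Two small corrections. First, \(S_H(P)\) is a polynomial, not merely rational (Lemma~\ref{lem:schur-polynomial}), and its extension to all of \(H_{r,i}\) is already Step~1 of Theorem~\ref{thm:flat-cell-recovery}. Second, for exterior patches with \(r>1\) you only need the local DN identity on \(\Omega\) restricted to \(\Gamma_{r,i}\), which Lemma~\ref{lem:face-data-from-local-dn} uses directly. The DN identity on \(\Omega_r\) there does not follow from mere restriction.
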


The auxiliary-half-ball assumption is harmless for a fixed admissible
coefficient but useful when the geometry is fixed in advance.  Indeed, if a
matrix polynomial is uniformly positive definite in a bounded cell, continuity
extends the same lower bound to the cell closure.  Compactness then gives an
open neighborhood of that closure on which the polynomial remains positive
definite.  An exterior half-ball attached to the cell can therefore be shrunk
to lie in this neighborhood.  We nevertheless retain the explicit half-ball
condition in Theorem \ref{thm:main}, since the same prescribed auxiliary set is
used for both conductivities and, below, uniformly over parameter families.

The polynomial coefficients form the finite-dimensional parameter space
\begin{equation}
  \mathcal V=\prod_{\alpha\in A}
  \operatorname{Poly}_{\le N_\alpha}(\R^n;\Sym_n).
\end{equation}
For \(P=(P_\alpha)_{\alpha\in A}\in\mathcal V\), let \(\sigma_P=P_\alpha\)
in \(D_\alpha\).  Fix the subdivision, recovery order, measured patch, the
compact sets \(\overline{W_{r,ijk}}\), and the closures of the auxiliary
half-balls.  Let \(\mathcal A\subset\mathcal V\) consist of coefficients for
which the relevant polynomial matrices are positive definite on all cell
closures, all \(\overline{W_{r,ijk}}\), and all auxiliary half-ball closures.
This is an open subset of \(\mathcal V\).  Indeed, for any
\(P\in\mathcal A\), the least eigenvalue has a positive minimum on each of
these compact sets.  There are only finitely many such minima, so their minimum
is a common positive margin; sufficiently small changes of the polynomial
coefficients preserve it.  This also makes clear that the assertion concerns a
fixed finite collection of compact sets, not positivity on an unbounded affine
intersection.

\begin{corollary}
\label{cor:holder-stability}
Let \(\mathcal A\subset\mathcal V\) be as above and let
\(K\Subset \mathcal A\) be compact.  Then there are constants
\(C>0\) and \(\theta\in(0,1]\) such that, for all \(P,Q\in K\),
\begin{equation}
  \max_{\alpha\in A}
  \|P_\alpha-Q_\alpha\|_{C(\overline{D_\alpha})}
  \le
  C\bigl\|
  \DN^\Omega_{\sigma_P,\Sigma}
  -\DN^\Omega_{\sigma_Q,\Sigma}
  \bigr\|_{\mathcal L(H^{1/2}_0(\Sigma),H^{-1/2}(\Sigma))}^{\theta}.
  \label{eq:holder-stability-corollary}
\end{equation}
The constants depend on \(K\), the fixed subdivision and recovery order, and the
chosen norms, but not on \(P,Q\in K\).
\end{corollary}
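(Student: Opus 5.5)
The plan is to derive the corollary from Theorem \ref{thm:main} together with a general finite-dimensional analytic stability principle. The abstract mentions such a theorem; I will formulate it and invoke it in the following form. Let \(K\) be a compact subset of an open set \(\mathcal A\subset\R^d\). Let \(\Phi:\mathcal A\to Y\) be a map into a Banach space that is real-analytic, or at least \(C^1\) with analytic dependence along segments. Suppose \(\Phi\) is injective on \(\mathcal A\). Then there are \(C,\theta\) with
\[
|P-Q|\le C\|\Phi(P)-\Phi(Q)\|^\theta
\qquad\text{for all } P,Q\in K .
\]
Here I take \(\Phi(P)=\DN^\Omega_{\sigma_P,\Sigma}\in\mathcal L(H^{1/2}_0(\Sigma),H^{-1/2}(\Sigma))\). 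The left side of \eqref{eq:holder-stability-corollary} is a norm on the finite-dimensional space \(\mathcal V\), since a polynomial vanishing on a nonempty open set \(D_\alpha\) vanishes identically. It is therefore equivalent to \(|P-Q|\), and the dependence of \(C\) on the chosen norms enters only here.

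\emph{Step 1: injectivity.} Each \(P\in\mathcal A\) gives \(\sigma_P\in\mathcal P(\mathcal D,\{N_\alpha\})\), which is uniformly elliptic because the pieces are positive definite on the compact cell closures. By the definition of \(\mathcal A\), the pieces are also positive definite on the sets \(\overline{W_{r,ijk}}\) and on the auxiliary half-balls. Hence the hypotheses of Theorem \ref{thm:main} hold, and \(\Phi(P)=\Phi(Q)\) forces \(P=Q\).

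\emph{Step 2: analyticity.} The map \(P\mapsto\sigma_P\in L^\infty(\Omega;\Sym_n)\) is affine, and \(\sigma\mapsto\DN_\sigma\) is real-analytic on the open set of uniformly elliptic \(\sigma\in L^\infty\). To see the latter, write \(u_f=E f+w\), where \(Ef\) is a fixed extension and \(w\in H^1_0\) solves \(\diver(\sigma\nabla w)=-\diver(\sigma\nabla Ef)\). The solution operator is an inverse of an operator depending affinely on \(\sigma\), so it is analytic by the Neumann series, and \eqref{eq:local-dn} is then analytic in \(\sigma\).

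\emph{Step 3: stability via a Łojasiewicz-type argument.} This is the main obstacle. Consider the function
\[
F(P,Q)=\|\Phi(P)-\Phi(Q)\|^2 .
\]
A squared operator norm is not analytic, so I would not apply Łojasiewicz to \(F\) directly. Instead I would reduce to countably many analytic functions by testing against dense sequences \(f_k,h_l\). Set
\[
g_{kl}(P,Q)=\langle(\Phi(P)-\Phi(Q))f_k,h_l\rangle .
\]
By Step 1, the common zero set of all the \(g_{kl}\) on \(K\times K\) is the diagonal. By Noetherianity of analytic germs and compactness of \(K\times K\), finitely many of them already cut out exactly the diagonal near \(K\times K\); this is a standard coherence argument, and I would cite it. Then
\[
G=\sum_{\text{finite}}|g_{kl}|^2
\]
is real-analytic and vanishes exactly on the diagonal. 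The Łojasiewicz inequality for \(G\) against the distance to its zero set gives
\[
G(P,Q)\ge c\,|P-Q|^{m}
\]
for some \(c>0\) and \(m\), locally and hence, by compactness, on all of \(K\times K\). Each \(g_{kl}\) is bounded by \(\|f_k\|\,\|h_l\|\,\|\Phi(P)-\Phi(Q)\|\), so \(G\le C'\|\Phi(P)-\Phi(Q)\|^2\). Combining the two bounds gives the estimate with \(\theta=2/m\), which lies in \((0,1]\) after enlarging \(m\) if necessary.

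If the finite-reduction step proves delicate, I would fall back on the paper's stated general finite-dimensional stability theorem. That theorem presumably packages exactly this Łojasiewicz argument, with constants depending only on \(K\) and the fixed geometry.
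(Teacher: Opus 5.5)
Your Steps 1 and 2 match the paper's proof: injectivity on \(\mathcal A\) from Theorem~\ref{thm:main}, real analyticity of \(P\mapsto\DN^\Omega_{\sigma_P,\Sigma}\) from affine dependence and analytic inversion, and norm equivalence on \(\mathcal V\). The difference is Step 3. The paper applies \cite[Theorem 3.1]{Carstea2026HolderStability} with \(R(P)=P\) and stops. You instead open that black box: you scalarize the operator difference through the pairings \(g_{kl}\) and apply the \L{}ojasiewicz inequality to an analytic function vanishing exactly on the diagonal. This route is self-contained and shows where the non-explicit exponent \(\theta=2/m\) comes from. The citation is shorter and keeps analytic geometry out of the paper.

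One point in Step 3 is true, but your justification for it is insufficient. Noetherianity of the local rings plus compactness does not by itself give a single neighborhood of \(K\times K\) on which finitely many \(g_{kl}\) cut out the diagonal. At a point \(y\) it only gives relations \(g_{kl}=\sum_i a^{kl}_i g_i\) on neighborhoods of \(y\) that may shrink as \((k,l)\) varies. The finite-reduction claim needs local stationarity of decreasing chains of analytic sets, or of increasing chains of coherent ideal sheaves.

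Here you can bypass it. Since \(\Phi\) is real analytic in operator norm, it extends holomorphically and boundedly to one complex neighborhood of \(K\). Choose real, normalized \(f_k,h_l\) with dense spans. Then \(G=\sum_{k,l}2^{-k-l}g_{kl}^2\) converges uniformly together with its holomorphic extension, so \(G\) is real analytic. By density and injectivity it vanishes exactly on the diagonal, and \(G\le C\|\Phi(P)-\Phi(Q)\|^2\). Your \L{}ojasiewicz step then goes through unchanged.

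Finally, the weaker alternative ``\(C^1\) with analytic dependence along segments'' would not support a multivariable \L{}ojasiewicz inequality. You never use it, though, because Step 2 gives genuine real analyticity.
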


\begin{proof}
Consider the parameter-to-data map
\begin{equation}
  F(P)=\DN^\Omega_{\sigma_P,\Sigma}
\end{equation}
from \(\mathcal A\) to
\(\mathcal L(H^{1/2}_0(\Sigma),H^{-1/2}(\Sigma))\).  This map is real analytic
in the operator norm.  To see this, fix a bounded right inverse for the trace
map.  The zero-boundary part of the solution is obtained by inverting the
uniformly elliptic Dirichlet operator
\begin{equation}
  L_P:H^1_0(\Omega)\to H^{-1}(\Omega),
  \qquad
  \langle L_P u,v\rangle=\int_\Omega \sigma_P\nabla u\cdot\nabla v\,\dd x .
\end{equation}
The dependence of \(L_P\) on \(P\) is affine, and inversion is real analytic on
the open set of bounded isomorphisms.  The weak formula for the DN map therefore
shows that \(F\) is real analytic.

By Theorem \ref{thm:main},
\begin{equation}
  F(P)=F(Q)\quad\Longrightarrow\quad P=Q,
  \qquad P,Q\in\mathcal A .
\end{equation}
Apply \cite[Theorem 3.1]{Carstea2026HolderStability} with recovered quantity
\(R(P)=P\).  It gives a H\"older estimate for \(P-Q\) on \(K\), and norm
equivalence in the finite-dimensional space \(\mathcal V\) yields
\eqref{eq:holder-stability-corollary}.
\end{proof}

Section \ref{sec:flat-cell} proves the one-cell theorem from Schur invariants on
flat faces.  Section \ref{sec:inner-extension} shows how a local DN map can be
continued across a known region to an internal interface.  These two results
are combined in Section \ref{sec:global-uniqueness} to prove Theorem
\ref{thm:main}.

The theorem extends the piecewise constant anisotropic result of
\cite{AlessandriniDeHoopGaburro2017} to polynomial pieces.  In that work,
variation of the tangent plane along a curved boundary supplies enough
directions to identify the anisotropic tensor.  Here the independent conormals
of several flat faces play the same role: their Schur forms together remove the
Euclidean gauge.  Inner extension supplies those invariants at internal faces,
where the one-cell theorem can then be applied.

\section{Flat-cell recovery}
\label{sec:flat-cell}

We isolate the local problem used at each recovery step.  On a flat face, the
zeroth-order boundary invariant is the conormal Schur form defined below.  For
a polynomial conductivity, equality of Schur forms on an open patch extends to
the supporting hyperplane.  Three hyperplanes with independent conormals then
determine the matrix on their common intersection.  A divisibility argument on
the hyperplane arrangement converts this pointwise information into equality
of the matrix polynomials.

\subsection{The Schur form as a polynomial face invariant}

Let \(V\simeq\R^n\), let \(H\subset V\) be an affine hyperplane, and choose a
nonzero conormal \(\ell\in V^*\).  We regard a conductivity matrix \(A\) as a
positive definite symmetric bilinear form on covectors.  Its conormal Schur
form is
\begin{equation}
  S_{\ell}(A)([\xi],[\eta])
  =A(\ell,\ell)A(\xi,\eta)-A(\ell,\xi)A(\ell,\eta),
  \qquad [\xi],[\eta]\in V^*/\R\ell\simeq T^*H .
  \label{eq:conormal-schur-form}
\end{equation}
The expression is independent of the representatives \(\xi\) and \(\eta\).
Rescaling \(\ell\) by \(c\) rescales \(S_\ell(A)\) by \(c^2\).  In particular,
\(S_{-\ell}=S_\ell\), and equality of two Schur forms is independent of the
normalization and orientation of the conormal.  We write \(S_H(A)\) after
fixing a Euclidean unit conormal to \(H\).

If Euclidean coordinates are chosen so that \(H=\{x_n=0\}\) and
\(\ell=dx_n\), and if
\begin{equation}
  A=
  \begin{pmatrix}
    A_T & b\\ b^T & c
  \end{pmatrix},
  \label{eq:block-form}
\end{equation}
then the Schur form is the usual scaled Schur complement,
\begin{equation}
  S_H(A)=cA_T-bb^T,
  \label{eq:schur-form}
\end{equation}
viewed as a quadratic form on \(T^*H\).  The boundary-fixing change of variables
with differential
\begin{equation}
  DF|_H=
  \begin{pmatrix}
    I_{n-1} & -b/c\\ 0 & 1/c
  \end{pmatrix},
\end{equation}
puts the pushed-forward conductivity \(F_*A\), at the face, in a form with
zero normal--tangential block, normal--normal entry one, and tangential block
\(S_H(A)\).  Thus \(S_H(A)\) is the part of the conductivity visible through
the boundary-fixing gauge at that face.

For later algebra it is useful to record the associated metric.  For
\(n\ge3\), set
\begin{equation}
  g_\sigma=(\det \sigma)^{1/(n-2)}\sigma^{-1},
  \label{eq:metric-from-conductivity}
\end{equation}
as a covariant metric.  Then the conductivity equation can be written as the
Laplace--Beltrami equation for \(g_\sigma\), and
\begin{equation}
  \sigma=(\det g_\sigma)^{1/2}\,g_\sigma^{-1}.
  \label{eq:conductivity-from-metric}
\end{equation}
Let \(h_H(\sigma)=g_\sigma|_{TH}\) be the induced tangential metric.  In the
coordinates above,
\begin{equation}
  h_H(\sigma)=(\det S_H(\sigma))^{1/(n-2)}S_H(\sigma)^{-1},
  \qquad
  S_H(\sigma)=(\det h_H(\sigma))h_H(\sigma)^{-1}.
  \label{eq:metric-schur-relation}
\end{equation}
To verify these identities, put \(B=A_T-bc^{-1}b^T\).  The block inverse
formula gives \(h_H(\sigma)=(\det\sigma)^{1/(n-2)}B^{-1}\), while
\(\det\sigma=c\det B\) and \(S_H(\sigma)=cB\).  This proves the first identity;
  the second follows by taking determinants.  The two algebraic maps are inverse on the
positive definite cone: if
\(h=(\det S)^{1/(n-2)}S^{-1}\), then
\(\det h=(\det S)^{1/(n-2)}\) and therefore \(S=(\det h)h^{-1}\).

For polynomial conductivities, the resulting face invariant is again
polynomial.

\begin{lemma}
\label{lem:schur-polynomial}
Let \(P\in\operatorname{Poly}_{\le N}(\R^n;\Sym_n)\), let \(H\subset\R^n\) be
an affine hyperplane with a fixed unit conormal, and fix affine coordinates on
\(H\).  Then every entry of \(x\mapsto S_H(P(x))\), \(x\in H\), is a
polynomial of degree at most \(2N\) on \(H\).  In particular, if
\(P_1,P_2\in\operatorname{Poly}_{\le N}(\R^n;\Sym_n)\) and
\(S_H(P_1)=S_H(P_2)\) on a nonempty relatively open subset of \(H\), then
\(S_H(P_1)=S_H(P_2)\) on all of \(H\).
\end{lemma}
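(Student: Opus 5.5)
We work directly from the definition \eqref{eq:conormal-schur-form} and the block expression \eqref{eq:schur-form}. First we fix a Euclidean frame adapted to \(H\). Choose a rigid motion \(x=x_0+Qy\), with \(Q\) orthogonal and \(x_0\in H\), so that \(H=\{y_n=0\}\) and the fixed unit conormal becomes \(\pm dy_n\). The map \(y\mapsto P(x_0+Qy)\) is again a matrix polynomial of degree at most \(N\). The conjugated matrix \(Q^TP(x_0+Qy)Q\) represents the same bilinear form on covectors in the new frame; its entries are fixed linear combinations of the entries of \(P\), so it is still a polynomial of degree at most \(N\) in \(y\).

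Any other affine coordinates on \(H\) differ from \((y_1,\ldots,y_{n-1})\) by an affine change of variables, which preserves polynomial degree. Changing the basis of \(T^*H\) conjugates \(S_H\) by a constant matrix, which also preserves the degree bound. The sign of the conormal is irrelevant since \(S_{-\ell}=S_\ell\). So it suffices to work in the frame \((y_1,\ldots,y_{n-1})\) with the matrix in block form \eqref{eq:block-form}.

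In that frame, restrict to \(y_n=0\) and write
\[
  A(y')=\begin{pmatrix} A_T(y') & b(y')\\ b(y')^T & c(y')\end{pmatrix}.
\]
Each of \(A_T\), \(b\), \(c\) has polynomial entries of degree at most \(N\) in \(y'\in\R^{n-1}\). By \eqref{eq:schur-form}, the entries of \(S_H=cA_T-bb^T\) are
\[
  c\,(A_T)_{pq}-b_pb_q ,
\]
each a sum of products of two polynomials of degree at most \(N\), hence of degree at most \(2N\). No positivity is needed here: \(S_\ell(A)\) is defined by the same bilinear formula for every symmetric matrix, so the polynomial identity holds on all of \(H\), not only where \(P\) is positive definite.

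For the second assertion, apply the first part to \(P_1\) and \(P_2\). Then each entry of \(S_H(P_1)-S_H(P_2)\) is a polynomial on \(H\simeq\R^{n-1}\) that vanishes on a nonempty open set. A polynomial vanishing on an open set of \(\R^{n-1}\) is identically zero: all its derivatives vanish at an interior point, so all Taylor coefficients there vanish. Hence \(S_H(P_1)=S_H(P_2)\) on all of \(H\).

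There is no real obstacle. The only point needing care is the bookkeeping in the change of frame, namely checking that rotating covector coordinates conjugates the matrix and preserves the polynomial degree; this is straightforward.
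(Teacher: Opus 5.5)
Your proof is correct and follows the same route as the paper, which simply notes that each entry of \(S_H(P(x))\) is a constant-coefficient quadratic expression in the entries of \(P(x)\) and then applies the identity theorem for polynomials on \(\R^{n-1}\). Your adapted-frame formula \(S_H=cA_T-bb^T\) makes that quadratic expression explicit, and your bookkeeping for the rigid change of frame and for changes of affine coordinates on \(H\) is correct.
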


\begin{proof}
Each entry of \(S_H(P(x))\) is a quadratic expression with constant
coefficients in the entries of \(P(x)\), so it has degree at most \(2N\) on
\(H\).  The final assertion follows from the identity theorem for polynomials
on \(\R^{n-1}\).
\end{proof}

Lemma \ref{lem:schur-polynomial} does not require positivity: \(S_H(P)\) is
polynomial on all of \(H\), even if \(P\) ceases to be elliptic away from the
cell.

We now read this invariant directly from the weak conductivity DN form.  The
normalization of the surface density is important here.  The pairing in
\eqref{eq:local-dn} uses the fixed Euclidean surface measure on the flat face.
If instead one writes a Laplace--Beltrami DN operator using its natural metric
surface measure, conversion back to Euclidean measure contributes a density
factor.  Thus the conductivity and Riemannian DN operators should not simply be
identified without specifying their pairings; after the density conversion,
the symbol below is equivalent to \eqref{eq:metric-schur-relation}.

\begin{lemma}
\label{lem:face-data-from-local-dn}
Let \(W\) be a Lipschitz domain whose boundary is flat in a neighborhood of a
relatively open patch \(\Gamma\subset H\), and suppose that \(W\) lies on one
side of \(H\) near \(\Gamma\).  Let \(\sigma^{(1)},\sigma^{(2)}\) be smooth
uniformly elliptic anisotropic conductivities in a one-sided neighborhood of
\(\Gamma\).  If
\begin{equation}
  \DN^W_{\sigma^{(1)},\Gamma}=\DN^W_{\sigma^{(2)},\Gamma},
\end{equation}
then
\begin{equation}
  S_H(\sigma^{(1)})=S_H(\sigma^{(2)})
  \qquad\text{on }\Gamma .
  \label{eq:face-data-from-local-dn}
\end{equation}
\end{lemma}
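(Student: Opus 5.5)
We will show that the principal symbol of the local DN map at each point \(x_0\in\Gamma\) is an explicit function of \(S_H(\sigma(x_0))\) only, and that this function is injective on the positive definite cone. Equality of the DN maps then forces equality of the symbols, hence of the Schur forms. We may either cite the local boundary determination result of \cite{KangYun2003}, or reprove the symbol identity with oscillating test functions, as sketched below.

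\textbf{Normalization.} Work in coordinates with \(H=\{x_n=0\}\), \(W\) locally in \(\{x_n>0\}\), and \(\sigma=\begin{pmatrix}A_T&b\\ b^T&c\end{pmatrix}\). Fix \(x_0\in\Gamma\), a tangential covector \(\xi'\ne0\), and a cutoff \(\chi\in C_c^\infty(\Gamma^\circ)\) concentrated near \(x_0\). Set \(f_\lambda=\chi(x')e^{i\lambda x'\cdot\xi'}\).

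\textbf{Approximate solutions.} Build an approximate solution
\[
u_\lambda=\chi(x')\,\eta(x_n)\,e^{i\lambda(x'\cdot\xi'+x_n\zeta)},
\]
where \(\eta\) is a cutoff in \(x_n\). The coefficient \(\zeta\) is the root with positive imaginary part of the frozen characteristic equation
\[
c\zeta^2+2(b\cdot\xi')\zeta+A_T\xi'\cdot\xi'=0
\]
at \(x_0\). Explicitly,
\[
\zeta=\frac{-b\cdot\xi'+i\sqrt{S_H(\sigma)\xi'\cdot\xi'}}{c},
\]
with the discriminant computed from \eqref{eq:schur-form}. Standard energy estimates show that \(u_\lambda\) differs from the true solution \(u_{f_\lambda}\) by \(o(\lambda)\) in \(H^1\)-energy as \(\chi\) shrinks. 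Substituting into \eqref{eq:local-dn} with \(h=\overline{f_\lambda}\) gives
\[
\lambda^{-1}\langle\DN^W_{\sigma,\Gamma}f_\lambda,\overline{f_\lambda}\rangle\to
\|\chi\|_{L^2}^2\,\sqrt{S_H(\sigma(x_0))\xi'\cdot\xi'}.
\]
The step \(\lambda\to\infty\) followed by shrinking \(\chi\) must be handled with care.

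\textbf{Conclusion.} The key point in evaluating the limit is the quadratic form \(\sigma\nabla u\cdot\overline{\nabla u}\) integrated in \(x_n\). Its main term is
\[
\lambda^2\bigl(A_T\xi'\cdot\xi'+2\,\mathrm{Re}(\zeta)\,b\cdot\xi'+c|\zeta|^2\bigr)\big/\bigl(2\lambda\,\mathrm{Im}\zeta\bigr),
\]
which simplifies to \(\lambda\sqrt{S_H\xi'\cdot\xi'}\) by the root relation. Equality of the two DN maps therefore gives
\[
S_H(\sigma^{(1)}(x_0))\xi'\cdot\xi'=S_H(\sigma^{(2)}(x_0))\xi'\cdot\xi'
\]
for every \(\xi'\). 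Polarization yields equality of the symmetric forms, and since \(x_0\in\Gamma\) is arbitrary, \eqref{eq:face-data-from-local-dn} holds on \(\Gamma\).

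The main obstacle is the limiting energy computation: controlling the error between \(u_\lambda\) and \(u_{f_\lambda}\) and getting the Euclidean surface density normalization right. This is where we would lean on \cite{KangYun2003} rather than redo the argument. In particular, the symbol must be \(\sqrt{S_H\xi'\cdot\xi'}\), which involves no extra determinant factor, unlike the Riemannian normalization discussed before the lemma.
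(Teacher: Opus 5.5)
Your proposal is correct and follows essentially the same route as the paper. Both arguments work in flat coordinates and identify the principal symbol of the weak DN form, relative to Euclidean surface measure, as \(\xi\mapsto\sqrt{\xi^T S_H(\sigma)\xi}\) via the decaying exponential solution, defer the localization and error control to \cite{KangYun2003}, and then square and polarize. The only cosmetic difference is that you evaluate the symbol through the energy integral of the oscillatory decaying solution, whereas the paper reads it off from the outward conormal derivative \(c\lambda-ib\cdot\xi\) of \(e^{ix'\cdot\xi-\lambda x_n}\).
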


\begin{proof}
Choose Euclidean coordinates in which \(H=\{x_n=0\}\), with \(W\) locally on
the side \(x_n>0\), and write at a boundary point
\begin{equation}
  \sigma=
  \begin{pmatrix}
    A&b\\ b^T&c
  \end{pmatrix},
  \qquad c>0.
\end{equation}
The frozen half-space equation with tangential covector \(\xi\) has a decaying
solution of the form
\(e^{ix'\cdot\xi-\lambda x_n}\).  Choosing the root with positive real part and
taking the outward conormal gives
\begin{equation}
  c\lambda-i b\cdot\xi
  =\sqrt{\xi^T(cA-bb^T)\xi}.
  \label{eq:weak-dn-principal-symbol}
\end{equation}
Consequently the principal symbol of the weak conductivity DN form, relative
to the fixed Euclidean density, is
\begin{equation}
  \xi\longmapsto\sqrt{\xi^T S_H(\sigma(x))\xi}.
\end{equation}
This local symbol calculation, including the localization from the weak local
DN form, is the boundary determination result of \cite{KangYun2003}.  Equality
of the two local forms therefore gives equality of the displayed quadratic
forms for every tangential \(\xi\).  Squaring and polarizing yields
\(S_H(\sigma^{(1)}(x))=S_H(\sigma^{(2)}(x))\) at every \(x\in\Gamma\).
\end{proof}

\subsection{Three faces determine the matrix pointwise}

The next lemma shows that three independent faces remove the pointwise gauge
left by a single face.

\begin{lemma}
\label{lem:three-schur-pointwise}
Let \(H_1,H_2,H_3\) be affine hyperplanes through a point \(x_0\), and let
\(\ell_r\) be nonzero conormals to \(H_r\) at \(x_0\).  Assume that
\(\ell_1,\ell_2,\ell_3\) are linearly independent.  Let \(A,B\in \Sym_n\) be
positive definite, viewed as bilinear forms on covectors.  If
\begin{equation}
  S_{\ell_r}(A)=S_{\ell_r}(B),\qquad r=1,2,3,
  \label{eq:three-schur-equality}
\end{equation}
as quadratic forms on \(T^*_{x_0}H_r\), then \(A=B\).
\end{lemma}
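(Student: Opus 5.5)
The plan is to pass from conductivities to the associated metrics \eqref{eq:metric-from-conductivity}, where equality of Schur forms becomes equality of tangential restrictions. Then a short factorization argument for quadratic forms finishes the proof. Since equality of Schur forms does not depend on the normalization or orientation of the conormal, we may replace each \(\ell_r\) by the Euclidean unit conormal of \(H_r\). Write \(T_r=\ker\ell_r\subset V\) for the tangent space of \(H_r\) at \(x_0\). For each \(r\), choose Euclidean coordinates with \(T_r=\{x_n=0\}\) and \(\ell_r=dx_n\). In these coordinates \eqref{eq:metric-schur-relation} gives
\begin{equation*}
  g_A|_{T_r}=(\det S_{\ell_r}(A))^{1/(n-2)}S_{\ell_r}(A)^{-1},
\end{equation*}
and similarly for \(B\). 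This uses \(n\ge3\), together with the positive definiteness of \(A\) and \(B\), which makes \(S_{\ell_r}\) positive definite and the formula meaningful. The right-hand side depends only on the Schur form. The restriction \(g_A|_{T_r}\) is coordinate-free. Hence \eqref{eq:three-schur-equality} yields \(g_A|_{T_r}=g_B|_{T_r}\) for \(r=1,2,3\).

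Next I will show that the symmetric bilinear form \(D=g_A-g_B\) on \(V\) vanishes. First I establish a one-hyperplane fact: a symmetric form vanishing on \(T_r\times T_r\) has the form \(D=\ell_r\otimes m_r+m_r\otimes\ell_r\) for some \(m_r\in V^*\). To see this, choose a basis adapted to \(T_r\) plus one transversal vector; the only possibly nonzero entries of \(D\) are then in the last row and column. Viewing \(D\) as a quadratic polynomial \(q(v)=D(v,v)\), this says \(q=2\ell_r m_r\) in the polynomial ring, which is a unique factorization domain. From \(\ell_1m_1=\ell_2m_2\) and the fact that \(\ell_2\) is irreducible and not proportional to \(\ell_1\), we get \(\ell_2\mid m_1\), so \(q=c\,\ell_1\ell_2\) for a constant \(c\). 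Then \(\ell_3\mid c\,\ell_1\ell_2\). Since \(\ell_3\) is proportional to neither \(\ell_1\) nor \(\ell_2\) (guaranteed by linear independence), this forces \(c=0\). So \(q=0\), and by polarization \(D=0\).

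Finally, \(g_A=g_B\) and \eqref{eq:conductivity-from-metric} give \(A=(\det g_A)^{1/2}g_A^{-1}=B\). This uses that \(\sigma\mapsto g_\sigma\) is inverted by that formula on the positive definite cone, as verified after \eqref{eq:metric-schur-relation}.

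I expect the main care to be needed in the first step. One must check that the per-face coordinate normalization in \eqref{eq:metric-schur-relation} is consistent with the unit-conormal convention, so that equal Schur forms really give equal tangential metrics without stray scalar factors. The \(c^2\) rescaling remark handles this. If the metric route proves awkward, a fallback is a direct argument: \(S_\ell(A)(\xi,\xi)\) is the Gram determinant of \(A\) on \(\operatorname{span}\{\ell,\xi\}\). One would then recover the ratios \(A(\ell_r,\cdot)/A(\ell_r,\ell_r)\) from pairs of faces, and fix the scale from the remaining face. The factorization argument above seems cleaner.
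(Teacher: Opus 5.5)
Your proof is correct. Its outer structure matches the paper's: normalize the conormals, use \eqref{eq:metric-schur-relation} to turn the three Schur equalities into $g_A|_{T_r}=g_B|_{T_r}$, and invert via \eqref{eq:conductivity-from-metric}.

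The difference is the middle step showing $g_A=g_B$.
\begin{itemize}
\item The paper picks (generally non-orthonormal) linear coordinates with $\ell_r=dx_r$ for $r=1,2,3$. Then $T_r=\operatorname{span}\{e_j:j\ne r\}$. Any pair of indices $i,j$ omits some $r\in\{1,2,3\}$, so every entry $g_A(e_i,e_j)$ is seen by one of the three faces.
\item You instead write $q(v)=(g_A-g_B)(v,v)=2\ell_r(v)m_r(v)$ for each $r$. Unique factorization then rules out a nonzero quadratic with three pairwise non-associate linear factors.
\end{itemize}

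The paper's count is completely hands-on, but as written it uses linear independence to build the adapted coordinates. Your factorization argument uses only that $\ell_1,\ell_2,\ell_3$ are pairwise non-proportional, i.e.\ that $T_1,T_2,T_3$ are three distinct hyperplanes. This holds both at the step $\ell_2\mid m_1$ and at the step $\ell_3\nmid c\,\ell_1\ell_2$. So you have in fact proved the pointwise lemma under this weaker hypothesis; for instance, $\ell_3=\ell_1+\ell_2$ would also do.

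Linear independence is still needed elsewhere in the paper, namely to make the triple intersections $H_i\cap H_j\cap H_k$ nonempty affine subspaces of dimension $n-3$. Your route also fits the divisibility philosophy of Lemma~\ref{lem:hermite-arrangement}: it is the same counting of linear factors against degree, here for a quadratic form and three hyperplanes through the origin.
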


\begin{proof}
We may normalize the conormals, since rescaling contributes the same factor to
both sides of \eqref{eq:three-schur-equality}.  Set
\(g_A=(\det A)^{1/(n-2)}A^{-1}\) and
\(g_B=(\det B)^{1/(n-2)}B^{-1}\), positive definite bilinear forms on vectors,
and let \(T_r=\ker\ell_r\) be the tangent space of \(H_r\) at \(x_0\).  By
\eqref{eq:metric-schur-relation}, the hypothesis
\eqref{eq:three-schur-equality} is equivalent to
\begin{equation}
  g_A|_{T_r}=g_B|_{T_r},\qquad r=1,2,3.
  \label{eq:three-tangential-metrics}
\end{equation}

Choose linear coordinates
\((x_1,\ldots,x_n)\) in which \(\ell_r=dx_r\) for \(r=1,2,3\); this is possible
because the conormals are linearly independent.  Then
\(T_r=\operatorname{span}\{e_j:j\ne r\}\).  Any pair of indices
\(i,j\in\{1,\ldots,n\}\) omits at least one \(r\in\{1,2,3\}\), and for that
\(r\) we have \(e_i,e_j\in T_r\), so
\(g_A(e_i,e_j)=g_B(e_i,e_j)\) by \eqref{eq:three-tangential-metrics}.  Hence
\(g_A=g_B\), and \eqref{eq:conductivity-from-metric} gives \(A=B\).
\end{proof}

\subsection{Vanishing on a hyperplane arrangement}

For an affine hyperplane \(L\subset E\), a \emph{defining affine function} is a
nonconstant affine function \(\ell:E\to\R\) such that \(L=\{\ell=0\}\); it is
unique up to a nonzero scalar.  We write \(\operatorname{Poly}(L)\) for the
ring of polynomial functions on \(L\), identified with
\(\operatorname{Poly}(E)/(\ell)\).  Divisibility below is understood in this
coordinate ring.

\begin{lemma}
\label{lem:hermite-arrangement}
Let \(E\) be an affine space of dimension at least two, and let
\(L_1,\ldots,L_s\subset E\), \(s\ge2\), be distinct affine hyperplanes with
simple pairwise intersections: \(L_a\cap L_b\) has codimension two in \(E\)
for \(a\ne b\), and no such intersection is contained in a third \(L_c\).
If \(q\in \operatorname{Poly}_{\le s-2}(E)\) vanishes on every pairwise
intersection \(L_a\cap L_b\), then \(q=0\).
\end{lemma}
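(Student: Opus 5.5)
Induction on \(s\), with a fixed dimension \(\dim E\ge2\).

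\emph{Base case \(s=2\).} Then \(q\) has degree at most \(0\), so it is a constant. It vanishes on \(L_1\cap L_2\), which is nonempty because it has codimension two (in particular it is not empty). Hence \(q=0\).

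\emph{Inductive step, \(s\ge3\).} Assume the statement for \(s-1\) hyperplanes. Let \(\ell_s\) be a defining affine function of \(L_s\), and work in \(E'=L_s\), which has dimension \(\dim E-1\ge1\).

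The restriction \(q|_{L_s}\) is a polynomial of degree at most \(s-2\) on \(L_s\). It vanishes on the \(s-1\) sets \(L_a\cap L_s\), \(a<s\). By the hypotheses these are affine hyperplanes of \(L_s\) (codimension two in \(E\)). They are pairwise distinct: if \(L_a\cap L_s=L_b\cap L_s\), then \(L_a\cap L_s\subset L_b\), which is excluded.

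I claim a polynomial on \(L_s\) vanishing on \(s-1\) distinct hyperplanes of \(L_s\) is divisible by the product of their defining functions. Indeed, \(\operatorname{Poly}(L_s)\) is a UFD, and distinct hyperplanes have non-associate irreducible linear defining functions. A polynomial vanishing on the hyperplane \(\{m=0\}\) is divisible by \(m\), by changing coordinates so that \(m\) is a coordinate. The product has degree \(s-1>s-2\), so \(q|_{L_s}=0\). Therefore \(q=\ell_s q'\) with \(\deg q'\le s-3\).

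Next take \(a\ne b<s\) and a point \(x\in L_a\cap L_b\) not in \(L_s\). There \(\ell_s(x)\neq0\), so \(q'(x)=0\). By the simplicity hypothesis, \(L_a\cap L_b\not\subset L_s\). Hence \((L_a\cap L_b)\cap L_s\) is either empty or a proper affine subspace of the affine space \(L_a\cap L_b\). So \(q'\) vanishes on a dense subset of \(L_a\cap L_b\), and by continuity on all of it.

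The family \(L_1,\dots,L_{s-1}\) still has simple pairwise intersections, since these are sub-conditions of the original ones. It has \(s-1\ge2\) members, and \(q'\in\operatorname{Poly}_{\le (s-1)-2}(E)\). The induction hypothesis gives \(q'=0\), so \(q=0\).

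\emph{Dimension of \(E\).} When \(\dim E=2\), the \(L_s\) step works in a line, where the intersections \(L_a\cap L_s\) are points. The divisibility argument is then just the fact that a one-variable polynomial of degree at most \(s-2\) with \(s-1\) distinct roots vanishes. So no separate low-dimensional argument is needed.

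\emph{Main obstacle.} The key step is showing that the induced traces \(L_a\cap L_s\) are distinct. This is exactly the "no pairwise intersection contained in a third" condition. The density step also uses that same condition, applied to the pair \(a,b\) and the third hyperplane \(L_s\). Everything else is elementary divisibility.
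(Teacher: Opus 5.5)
Your proof is correct, but it is organized differently from the paper's.

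The core step is the same in both. You restrict \(q\) to one hyperplane of the arrangement, use simplicity to see that the \(s-1\) traces \(L_a\cap L_r\) are distinct hyperplanes of \(L_r\), and compare the degree \(s-1\) of the product of their defining functions with \(\deg q|_{L_r}\le s-2\).

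The paper runs this step for every \(r=1,\ldots,s\) at once and concludes that \(q\) vanishes on all of \(L_1\cup\cdots\cup L_s\). A single global divisibility by the degree-\(s\) product \(\ell_1\cdots\ell_s\) then finishes the proof, with no induction and no separate base case.

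You do the step only for \(L_s\), factor \(q=\ell_s q'\), and induct on \(s\). This forces you to re-establish the vanishing hypothesis for \(q'\) on the remaining pairwise intersections, which you do correctly by density. An equivalent algebraic version: \(\ell_s|_{L_a\cap L_b}\) is a nonzero affine function because \(L_a\cap L_b\not\subset L_s\), and the coordinate ring of \(L_a\cap L_b\) is an integral domain, so \(q'|_{L_a\cap L_b}=0\).

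Both arguments use exactly the same hypothesis, namely that no three of the \(L_a\) share a common codimension-two intersection. You invoke it twice per stage, once for distinctness and once for the transfer to \(q'\); the paper needs it only for distinctness. The paper's route is therefore shorter and avoids the transfer step entirely. Yours is the standard peeling induction, which makes the degree bookkeeping at each stage explicit but buys no additional generality.
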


\begin{proof}
Choose defining affine functions \(\ell_1,\ldots,\ell_s\) for
\(L_1,\ldots,L_s\), and fix \(r\).  For every \(a\ne r\), the intersection
\(L_a\cap L_r\) is a hyperplane inside the affine space \(L_r\), and
\(q|_{L_r}\) vanishes on it.  Since
\(L_a\cap L_r=\{\ell_a|_{L_r}=0\}\), this implies that
\(\ell_a|_{L_r}\) divides \(q|_{L_r}\) in \(\operatorname{Poly}(L_r)\).
This is the factor theorem after choosing affine coordinates on \(L_r\) with
\(\ell_a|_{L_r}\) as one coordinate.

These hyperplanes in \(L_r\) are distinct.  If
\(L_a\cap L_r=L_b\cap L_r\) for some distinct \(a,b\ne r\), then this
codimension-two intersection in \(E\) would be contained in the third
hyperplane \(L_b\), contradicting the simplicity assumption.  Hence the linear
factors \(\ell_a|_{L_r}\), \(a\ne r\), are pairwise coprime in
\(\operatorname{Poly}(L_r)\), and their product divides \(q|_{L_r}\):
\begin{equation}
  \prod_{a\ne r}\ell_a|_{L_r}\;\;\text{divides}\;\;q|_{L_r}.
\end{equation}
The product has degree \(s-1\), whereas \(\deg(q|_{L_r})\le s-2\); therefore
\(q|_{L_r}=0\).

Thus \(q\) vanishes on every \(L_r\).  Applying the factor theorem in \(E\)
shows that each \(\ell_r\) divides \(q\).  The factors are pairwise coprime, so
their degree-\(s\) product divides \(q\).  Since \(\deg q\le s-2\), it follows
that \(q=0\).
\end{proof}

\subsection{The one-cell recovery theorem}

\begin{theorem}
\label{thm:flat-cell-recovery}
Let \(n\ge3\).  Let \(D\subset\R^n\) be a cell and let
\begin{equation}
  \Gamma_i\subset \partial D\cap H_i,
  \qquad i=1,\ldots,M,
\end{equation}
be nonempty relatively open flat patches contained in the relative interiors of
flat faces of \(D\).  Let
\begin{equation}
  P_j\in\operatorname{Poly}_{\le N}(\R^n;\Sym_n),
  \qquad j=1,2,
\end{equation}
be uniformly positive definite in the cell \(D\).  Assume:
\begin{enumerate}[label=(\roman*)]
  \item \(M\ge N+3\) and the hyperplanes \(H_1,\ldots,H_M\) are
  \(N\)-generic; fix sets \(J_i\), \(|J_i|=N+2\), witnessing Definition
  \ref{def:N-generic};
  \item for every \(i\) and every two distinct indices \(j,k\in J_i\), there is
  a bounded nonempty relatively open subset with compact closure
  \begin{equation}
    W_{ijk}\Subset H_i\cap H_j\cap H_k
    \label{eq:common-triple-region}
  \end{equation}
  on which \(P_1\) and \(P_2\) are both positive definite.
\end{enumerate}
If the Schur forms agree on all accessible patches,
\begin{equation}
  S_{H_i}(P_1)=S_{H_i}(P_2)
  \qquad\text{on }\Gamma_i,
  \qquad i=1,\ldots,M,
  \label{eq:flat-cell-face-data-hypothesis}
\end{equation}
then \(P_1=P_2\) as matrix polynomials.  Consequently the two conductivity
pieces agree in \(D\).
\end{theorem}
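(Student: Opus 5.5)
Extend the face data to entire hyperplanes, pass to pointwise equality on triple intersections, and then run the divisibility argument twice: once inside each hyperplane, once in \(\R^n\).

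\emph{Step 1: extension to hyperplanes.} By Lemma \ref{lem:schur-polynomial}, the equality \eqref{eq:flat-cell-face-data-hypothesis} on the nonempty relatively open patch \(\Gamma_i\) extends to \(S_{H_i}(P_1)=S_{H_i}(P_2)\) on all of \(H_i\), for every \(i\). No positivity is needed for this.

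\emph{Step 2: pointwise equality on triple intersections.} Fix \(i\) and distinct \(j,k\in J_i\). At every \(x\in W_{ijk}\) both \(P_1(x)\) and \(P_2(x)\) are positive definite. The three Schur forms agree at \(x\), since \(x\) lies on \(H_i\), \(H_j\) and \(H_k\). By Definition \ref{def:N-generic}(a) the three conormals are linearly independent. Lemma \ref{lem:three-schur-pointwise} therefore gives \(P_1(x)=P_2(x)\).

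Set \(Q=P_1-P_2\in\operatorname{Poly}_{\le N}(\R^n;\Sym_n)\). Then every entry of \(Q\) vanishes on \(W_{ijk}\). Since \(W_{ijk}\) is a nonempty relatively open subset of the affine space \(H_i\cap H_j\cap H_k\), the identity theorem on that affine space shows that every entry of \(Q\) vanishes on all of \(H_i\cap H_j\cap H_k\). This holds for every pair of distinct \(j,k\in J_i\).

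\emph{Step 3: vanishing on each \(H_i\).} Fix \(i\) and work inside \(E=H_i\), which has dimension \(n-1\ge2\). Take \(L_a=H_i\cap H_{j_a}\) for the \(s=N+2\) indices \(j_a\in J_i\). By Definition \ref{def:N-generic}(b) these are distinct hyperplanes in \(H_i\) forming a simple arrangement. Their pairwise intersections are exactly the triple intersections \(H_i\cap H_j\cap H_k\), on which each entry of \(Q|_{H_i}\) vanishes by Step 2. Each such entry has degree at most \(N=s-2\). Lemma \ref{lem:hermite-arrangement} then gives \(Q|_{H_i}=0\), for every \(i=1,\dots,M\).

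\emph{Step 4: vanishing in \(\R^n\).} This step is the one needing care, because Lemma \ref{lem:hermite-arrangement} cannot simply be invoked in \(\R^n\): we do not know that the \(H_i\) form a simple arrangement there. Instead use the last part of that lemma's argument directly. The \(H_i\) are distinct affine hyperplanes, so their defining affine functions \(\ell_i\) are pairwise non-proportional modulo constants and hence pairwise coprime irreducibles. By the factor theorem each \(\ell_i\) divides every entry of \(Q\). Therefore the product \(\prod_{i=1}^M\ell_i\), of degree \(M\ge N+3>N\), divides each entry. Since each entry has degree at most \(N\), \(Q=0\).

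Hence \(P_1=P_2\), and so \(\sigma\) agrees on \(D\). Only distinctness of the \(H_i\) and \(M\ge N+1\) are needed in this final step. The substantive genericity is used in Steps 2 and 3, namely that condition (b) exactly matches the hypotheses of Lemma \ref{lem:hermite-arrangement}.
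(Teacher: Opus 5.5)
Your proposal is correct and follows the paper's proof step for step. It extends the Schur equality to each \(H_i\) via Lemma \ref{lem:schur-polynomial}, recovers \(P_1=P_2\) pointwise on \(W_{ijk}\) via Lemma \ref{lem:three-schur-pointwise} and then uses the identity theorem, applies Lemma \ref{lem:hermite-arrangement} inside each \(H_i\), and concludes by divisibility by \(\ell_1\cdots\ell_M\) in \(\R^n\); your remark that this last step only needs distinct hyperplanes and \(M\ge N+1\) is also accurate.
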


Condition (a) in Definition \ref{def:N-generic} implies that each triple
intersection
\(H_i\cap H_j\cap H_k\) appearing in (ii) is a nonempty affine subspace of
dimension \(n-3\); when \(n=3\) it is a single point, and \(W_{ijk}\) is that
point.

\begin{proof}
\emph{Step 1: Extend the face data to the supporting hyperplanes.}
By Lemma \ref{lem:schur-polynomial}, the entries of \(S_{H_i}(P_1)\) and
\(S_{H_i}(P_2)\) are polynomials on \(H_i\), so the equality
\eqref{eq:flat-cell-face-data-hypothesis} on the nonempty relatively open
patch \(\Gamma_i\) extends to
\begin{equation}
  S_{H_i}(P_1)=S_{H_i}(P_2)
  \qquad\text{on all of }H_i,
  \qquad i=1,\ldots,M.
  \label{eq:schur-equality-whole-hyperplane}
\end{equation}

\emph{Step 2: Recover the matrix on triple intersections.}
Fix \(i\) and distinct \(j,k\in J_i\), and put
\(F_{ijk}=H_i\cap H_j\cap H_k\).  Let \(x\in W_{ijk}\).  The three hyperplanes
\(H_i,H_j,H_k\) pass through \(x\) with linearly independent conormals, the
matrices \(P_1(x)\) and \(P_2(x)\) are positive definite, and by
\eqref{eq:schur-equality-whole-hyperplane} the three Schur form equalities hold
at \(x\).  Lemma \ref{lem:three-schur-pointwise} gives \(P_1(x)=P_2(x)\).
Hence \(Q=P_1-P_2\) vanishes on the nonempty
relatively open subset of the affine subspace \(F_{ijk}\).  The entries of
\(Q\) are polynomials, so by the identity theorem on \(F_{ijk}\),
\begin{equation}
  Q=0\qquad\text{on }F_{ijk}
  \qquad\text{for every }i\text{ and all distinct }j,k\in J_i.
  \label{eq:vanishing-on-triples}
\end{equation}

\emph{Step 3: Show that the difference vanishes on each supporting hyperplane.}
Fix \(i\) and set
\begin{equation}
  E=H_i,
  \qquad L_j=H_i\cap H_j\subset H_i,
  \qquad j\in J_i .
\end{equation}
The genericity conditions make \(\{L_j:j\in J_i\}\) a family of \(s=N+2\)
distinct hyperplanes of \(E\) with simple pairwise intersections, and
\(L_j\cap L_k=F_{ijk}\).  Every scalar entry
\(q=(P_1-P_2)_{\beta\gamma}|_{H_i}\) is a polynomial of degree at most
\(N=s-2\) on \(E\), and it vanishes on every \(L_j\cap L_k\) by
\eqref{eq:vanishing-on-triples}.  Lemma \ref{lem:hermite-arrangement} therefore
gives \(q=0\), and hence
\begin{equation}
  (P_1-P_2)|_{H_i}=0
  \qquad\text{for every }i=1,\ldots,M .
\end{equation}

\emph{Step 4: Conclude by divisibility.}
Let \(\ell_i\) be a defining affine function for \(H_i\).  For each scalar
entry \(Q_{\beta\gamma}=(P_1-P_2)_{\beta\gamma}\), the identity
\(Q_{\beta\gamma}|_{H_i}=0\) implies that \(\ell_i\) divides
\(Q_{\beta\gamma}\).  The hyperplanes are distinct, hence the factors
\(\ell_i\) are pairwise coprime, and
\begin{equation}
  \ell_1\ell_2\cdots\ell_M \quad\text{divides}\quad Q_{\beta\gamma} .
\end{equation}
Since \(M\ge N+3>N\ge\deg Q_{\beta\gamma}\), this forces
\(Q_{\beta\gamma}=0\).  The same holds for every entry, so \(P_1=P_2\).
\end{proof}

Lemma \ref{lem:face-data-from-local-dn} provides the Schur data required by
Theorem \ref{thm:flat-cell-recovery}.  Namely, suppose that \(W\) has flat
boundary near \(\Gamma_i\), its interior side there lies in \(D\), and
\begin{equation}
  \DN^W_{\sigma^{(1)},\Gamma_i}=\DN^W_{\sigma^{(2)},\Gamma_i},
\end{equation}
then \(S_{H_i}(P_1)=S_{H_i}(P_2)\) on \(\Gamma_i\) by Lemma
\ref{lem:face-data-from-local-dn}.  No orientation convention is needed because
\(S_{-\ell}=S_\ell\).  The same observation applies to exterior faces and to
internal faces reached by inner extension.

\section{Inner extension of the local DN map}
\label{sec:inner-extension}

Suppose that the conductivity is known in a connected region between the
measured boundary and an internal interface.  The next proposition continues
the local DN map across this known region to the interface.  It is the scalar
counterpart of the inner-extension argument used for inclusions in
\cite{Ikehata2002} and for piecewise homogeneous elasticity in
\cite{CarsteaHondaNakamura2018}.

Let \(\Omega_2\subset\Omega_1\subset\R^n\) be bounded Lipschitz domains and put
\begin{equation}
  G=\Omega_1\setminus\overline{\Omega_2},
  \qquad
  \Sigma_1=\partial\Omega_1\cap\partial G,
  \qquad
  \Sigma_2=\Omega_1\cap\partial\Omega_2\cap\partial G .
\end{equation}
Here \(G\) is the known region and \(\Sigma_2\) is its full interface with the
remaining domain.  The proposition concerns one selected flat patch
\(\Gamma_2\subset\Sigma_2\), away from the lower-dimensional skeleton.  Its
proof uses quotient traces of actual functions in \(H^1_0(\Omega_1)\), rather
than a separate function space on the full interface.  We do not assume that
\(\Omega_2\Subset\Omega_1\); all closures and neighborhoods involving
\(\Omega_2\) are relative to \(\Omega_1\).

\begin{proposition}
\label{prop:inner-extension}
Let \(\Omega_2\subset\Omega_1\subset\R^n\) be bounded Lipschitz domains such
that \(G=\Omega_1\setminus\overline{\Omega_2}\) is a nonempty face-connected
Lipschitz known region of the kind described in Lemma
\ref{lem:piecewise-poly-ucp}.  Let \(\Gamma_1\subset\Sigma_1\) be a nonempty
relatively open flat patch containing a relatively compact subpatch with an
auxiliary exterior half-ball: after an affine change of coordinates in a ball
\(B\),
\begin{equation}
  \Omega_1\cap B=\{x_n>0\}\cap B,
  \qquad
  B\cap\partial\Omega_1=\{x_n=0\}\cap B\Subset \Gamma_1,
\end{equation}
and the coefficient in the adjacent cell of \(G\) extends as a uniformly
elliptic polynomial coefficient to \(B\).  Let
\(\Gamma_2\subset\Sigma_2\) be a relatively open patch contained in the
relative interior of one flat face, and hence disjoint from the interface
skeleton.

Let \(\sigma^{(1)},\sigma^{(2)}\) be uniformly elliptic conductivities on
\(\Omega_1\), and assume that
\begin{equation}
  \sigma^{(1)}=\sigma^{(2)}=\sigma^0
  \quad\text{in }G,
\end{equation}
where \(\sigma^0\) is known in \(G\), is piecewise polynomial there, and is
extended to a known uniformly elliptic coefficient on \(\Omega_1\).  If
\begin{equation}
  \DN^{\Omega_1}_{\sigma^{(1)},\Gamma_1}
  =\DN^{\Omega_1}_{\sigma^{(2)},\Gamma_1},
  \label{eq:outer-local-equality-inner-section}
\end{equation}
then the local DN maps on the selected patch agree:
\begin{equation}
  \DN^{\Omega_2}_{\sigma^{(1)},\Gamma_2}
  =\DN^{\Omega_2}_{\sigma^{(2)},\Gamma_2}.
  \label{eq:inner-dn-equality}
\end{equation}
\end{proposition}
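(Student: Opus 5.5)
We follow the Ikehata-type known-region continuation scheme. The first step is to turn equality of the outer local DN maps into equality of Cauchy data on $\Gamma_1$, and then into equality of solutions in $G$. Extend $\Omega_1$ across the flat subpatch by the auxiliary half-ball: put $\widetilde\Omega_1=\Omega_1\cup(B\cap\{x_n<0\})$ and extend both conductivities to $\widetilde\Omega_1$ by the elliptic polynomial extension of the adjacent cell of $G$. The exterior piece is then known and common to both.

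Fix a point $y\in B\cap\{x_n<0\}$ and a compactly supported source (or Green-type singular solution) there. Let $\widetilde u_j$ be the solutions on $\widetilde\Omega_1$ for $\sigma^{(j)}$ with zero Dirichlet data on $\partial\widetilde\Omega_1$. Equality \eqref{eq:outer-local-equality-inner-section}, applied with traces supported in $B\cap\partial\Omega_1\Subset\Gamma_1$, gives two things: $\widetilde u_1=\widetilde u_2$ in the exterior half-ball, and the conormal derivatives agree there. Concretely, one compares $\widetilde u_1$ with the $\sigma^{(2)}$-solution in $\Omega_1$ having the same trace. The difference $w=\widetilde u_1-\widetilde u_2$ then solves the equation in the exterior piece and in $G$ with the common coefficient $\sigma^0$, and vanishes on an open set.

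Unique continuation for the piecewise polynomial coefficient, Lemma \ref{lem:piecewise-poly-ucp}, propagates $w=0$ through $G$. It applies cell by cell across flat faces along the face-connected chain: in each cell the coefficient is polynomial, hence analytic and Lipschitz, and zero Cauchy data propagate across each flat interface into the next cell. Hence $w=0$ in $G$. It follows that $w$ and its conormal derivative vanish on $\Sigma_2\supset\Gamma_2$ from the $G$ side, and by transmission also from the $\Omega_2$ side.

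With this in hand, for $\Omega_2$-solutions $v$ of the $\sigma^{(2)}$ equation, Green's identity yields
\[
\int_{\Omega_2}(\sigma^{(1)}-\sigma^{(2)})\nabla\widetilde u_1\cdot\nabla v\,dx
\]
expressed by boundary terms that vanish. The same vanishing holds for the combination testing $\DN^{\Omega_2}_{\sigma^{(1)},\Gamma_2}-\DN^{\Omega_2}_{\sigma^{(2)},\Gamma_2}$ against pairs $(f,h)$.

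The remaining step is density, via a Runge argument (Lemma \ref{lem:runge-known-shell}). We show that restrictions to $\Omega_2$ of the functions $\widetilde u$ above, generated by sources placed in the exterior half-ball, are dense in the relevant class of $\sigma^{(j)}$-solutions on $\Omega_2$. The class consists of solutions whose traces are $H^{1/2}_0(\Gamma_2)$ generators extended by zero elsewhere on $\partial\Omega_2$, in the sense of quotient traces of $H^1_0(\Omega_1)$ functions. The argument is Hahn--Banach. Suppose a functional annihilates all such restrictions. Represent it by the solution of an adjoint problem on $\widetilde\Omega_1$ with source supported near $\overline{\Omega_2}$; that solution vanishes in the exterior half-ball. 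By the same unique continuation through $G$ it vanishes in $G$, so its Cauchy data on $\Sigma_2$ vanish, which kills the functional. Given density, we first take smooth traces $f,h\in C_c^\infty(\Gamma_2^\circ)$. We identify $\langle\DN^{\Omega_2}_{\sigma^{(j)},\Gamma_2}f,h\rangle$ with a bilinear form on approximating families, using that the approximants agree in $G$ and have equal conormal data on $\Gamma_2$. Equality then extends to $H^{1/2}_0(\Gamma_2)$ by continuity in the local trace norm.

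The main obstacle is making this Runge/trace step rigorous on a selected patch $\Gamma_2$ of a multi-face, non-compactly-contained interface, where $\Omega_2$ may touch $\partial\Omega_1$. Specifically, one must match approximants whose traces concentrate on $\Gamma_2$ with zero traces on $\Sigma_2\setminus\Gamma_2$ and on $\partial\Omega_2\cap\partial\Omega_1$. I would handle it by working with quotient traces of $H^1_0(\Omega_1)$ functions and cutoffs supported in a neighborhood of $\overline{\Gamma_2}$ avoiding the skeleton, as the appendix indicates.
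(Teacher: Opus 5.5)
Your argument is correct, but it is organized differently from the paper's proof.

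\textbf{Your route.} You put sources in the exterior half-ball $B^-$ and solve with the \emph{unknown} coefficients on the enlarged domain. You glue the $\sigma^{(2)}$-solution in $\Omega_1$ that has the trace of $\widetilde u_1$ to $\widetilde u_1|_{B^-}$; the outer DN identity shows the glued function is $\widetilde u_2$, so $\widetilde u_1=\widetilde u_2$ in $B^-$. Unique continuation in $G\cup B^-$ then gives equality in $G$. The patch DN maps follow from the identity $\langle(\DN^{\Omega_2}_{\sigma^{(1)},\Gamma_2}-\DN^{\Omega_2}_{\sigma^{(2)},\Gamma_2})f,h\rangle=\int_{\Omega_2}(\sigma^{(1)}-\sigma^{(2)})\nabla u_1^f\cdot\nabla u_2^h\,dx$, where $u_j^f$ is the $\sigma^{(j)}$-solution in $\Omega_2$ with trace $f$, together with Runge density of $\{\widetilde u_1|_{\Omega_2}\}$. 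That density argument is legitimate even though $\sigma^{(1)}$ is unknown in $\Omega_2$: the adjoint solution only has to be continued through $G\cup B^-$, where $\sigma^{(1)}=\sigma^0$.

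\textbf{The paper's route.} The paper keeps every source inside the known region $G$. It uses the half-ball only inside Lemma \ref{lem:runge-known-shell}, and applies Runge approximation only to the reference coefficient $\sigma^0$. It converts the outer DN identity into $H(\mathcal G_1F)=H(\mathcal G_2F)$ through a limiting DN-difference identity (Lemma \ref{lem:compact-green-pairings}). By Green symmetry it extends the resulting trace equality to all interface functionals in $\mathcal T=N^\perp$. It then replaces your density step by an exact device: the conormal-difference maps $K_j$ are left inverses of $\mathcal G_j$ on $\mathcal T$ and injective modulo internal trace. They give $K_1=K_2$, i.e.\ equality of the whole interface conormal relation, and the patch maps are read off by testing individual lifts.

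\textbf{What each buys.} Your route is shorter and needs no reference extension of $\sigma^0$ into $\Omega_2$; it uses only gluing, unique continuation, the Alessandrini-type identity and Hahn--Banach. The paper's route never asserts an approximation property for the unknown coefficients, and it yields the interface Cauchy-data identity on all of $\Sigma_2$ at once, at the cost of the $N^\perp$/$K_j$ machinery.

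\textbf{Details to make precise.}
\begin{enumerate}
\item The enlarged domain must be $\Omega_1\cup B$, including the flat patch, not $\Omega_1\cup(B\cap\{x_n<0\})$. Choose $B$ small enough that $\Omega_1\cap B$ lies in the adjacent cell of $G$ and $\overline B\cap\overline{\Omega_2}=\emptyset$.
\item The boundary terms in your Green identity vanish only for $v$ with zero trace on $\partial\Omega_2\cap\partial\Omega_1$, because the conormal derivatives of $\widetilde u_1,\widetilde u_2$ are not matched there. This is exactly the class $u_2^h$, $h\in C_c^\infty(\Gamma_2)$, that you need.
\item If you use the ``approximating families'' formulation instead of the identity above, take one source sequence for both $j$. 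Since $\widetilde u_1[F_k]$ and $\widetilde u_2[F_k]$ have the same trace on $\partial\Omega_2$, convergence of the first to $u_1^f$ in $H^1(\Omega_2)$ forces convergence of the second to $u_2^f$ by the Dirichlet energy estimate.
\end{enumerate}
Finally, a lift of $f\in C_c^\infty(\Gamma_2)$ to $H^1_0(\Omega_1)$ places $u_1^f$ in your density class. So the ``main obstacle'' you anticipate does not actually arise in your scheme.
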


\noindent\emph{Proof sketch.}
For each coefficient let \(\mathcal G_j:H^{-1}(\Omega_1)\to
H^1_0(\Omega_1)\) be its variational Green operator.  Runge approximation and
the outer DN identity first give
\begin{equation}
  H(\mathcal G_1F)=H(\mathcal G_2F)
\end{equation}
for compact known-side sources \(F,H\).  Self-adjointness and an annihilator
argument then show that \(\mathcal G_1F\) and \(\mathcal G_2F\) have the same
quotient trace on \(\Sigma_2\) for every bounded interface functional \(F\).

For a global representative \(z\in H^1_0(\Omega_1)\), solve the Dirichlet
problem on each side of \(\Sigma_2\).  The difference of the two oriented
conormal functionals is bounded, depends only on the quotient trace of \(z\),
and is a left inverse to \(\mathcal G_j\) on interface functionals.  Gluing
shows that it is injective modulo equality of internal traces.  These facts give
equality of the conormal relations for the two coefficients.  Finally, each
smooth function compactly supported in \(\Gamma_2\) has its own global lift.
Testing the conormal identity on pairs of these lifts gives equality of the
inner DN forms, which extends by energy bounds to
\(H^{1/2}_0(\Gamma_2;\partial\Omega_2)\).  Appendix
\ref{app:inner-extension-proof} supplies the details.

\section{Proof of the main theorem}
\label{sec:global-uniqueness}

We iterate the one-cell theorem along an admissible recovery order, using
inner extension whenever a required face lies inside \(\Omega\).

\begin{proof}[Proof of Theorem \ref{thm:main}]
Let \(P_r^{(j)}\) denote the polynomial defining \(\sigma^{(j)}\) on
\(D_{\alpha_r}\), and let \(H_{r,i}\) support the accessible patch
\(\Gamma_{r,i}\).  We prove by induction on \(r\) that
\(P_r^{(1)}=P_r^{(2)}\).

The induction step reduces to equality of the Schur forms on the accessible
faces.  The given DN map supplies this equality on exterior faces; Proposition
\ref{prop:inner-extension} supplies it on internal ones.

For \(r=1\), every \(\Gamma_{1,i}\) lies in \(\Sigma\).  Restricting
\eqref{eq:outer-dn-equality} to this patch and applying Lemma
\ref{lem:face-data-from-local-dn} from the \(D_{\alpha_1}\)-side gives
\begin{equation}
  S_{H_{1,i}}(P_1^{(1)})
  =
  S_{H_{1,i}}(P_1^{(2)})
  \qquad\text{on }\Gamma_{1,i}.
  \label{eq:first-step-face-data}
\end{equation}
The hypotheses of Theorem \ref{thm:flat-cell-recovery} are part of Definition
\ref{def:stripping-order} and Theorem \ref{thm:main}; hence
\begin{equation}
  P_1^{(1)}=P_1^{(2)},
\end{equation}
and hence \(\sigma^{(1)}=\sigma^{(2)}\) in \(D_{\alpha_1}\).

Let \(r>1\), and assume that the conductivities agree on
\(D_{\alpha_1},\ldots,D_{\alpha_{r-1}}\).  Their common value is then known in
\(G_r=\Omega\setminus\overline{\Omega_r}\).  Fix an accessible patch
\(\Gamma_{r,i}\) of \(D_{\alpha_r}\).

If \(\Gamma_{r,i}\subset\Sigma\), restrict
\eqref{eq:outer-dn-equality} to \(\Gamma_{r,i}\) and apply Lemma
\ref{lem:face-data-from-local-dn} from the \(D_{\alpha_r}\)-side.  We obtain
\begin{equation}
  S_{H_{r,i}}(P_r^{(1)})
  =
  S_{H_{r,i}}(P_r^{(2)})
  \qquad\text{on }\Gamma_{r,i}.
  \label{eq:exterior-face-data}
\end{equation}
Now suppose that
\(\Gamma_{r,i}\subset\Xi_r=\Omega\cap\partial G_r\cap\partial\Omega_r\).
Extend the known coefficient on \(G_r\) to a uniformly elliptic piecewise
polynomial coefficient \(\sigma^0_r\) on \(\Omega\), so that
\begin{equation}
  \sigma^0_r=\sigma^{(1)}=\sigma^{(2)}
  \quad\text{in }G_r .
\end{equation}
For example, keep the recovered polynomials in \(G_r\) and choose any fixed
uniformly elliptic polynomial on each remaining cell.  The face-connectedness
of \(G_r\) to \(\Sigma_r\) and the exterior half-ball assumption allow us to
apply Lemma \ref{lem:runge-known-shell}, and hence Proposition
\ref{prop:inner-extension}, with
\begin{align}
  \Omega_1&=\Omega,&
  \Omega_2&=\Omega_r,&
  \Gamma_1&=\Sigma_r\subset\Sigma,\notag\\
  \Sigma_2&=\Xi_r,&
  \sigma^0&=\sigma^0_r,&
  \Gamma_2&=\Gamma_{r,i} .
\end{align}
The restriction of \eqref{eq:outer-dn-equality} to \(\Sigma_r\) is the outer
DN equality required in that proposition.  It follows directly that the DN
maps agree on the selected patch \(\Gamma_{r,i}\).  Applying Lemma
\ref{lem:face-data-from-local-dn} in \(\Omega_r\) gives
\begin{equation}
  S_{H_{r,i}}(P_r^{(1)})
  =
  S_{H_{r,i}}(P_r^{(2)})
  \qquad\text{on }\Gamma_{r,i}.
  \label{eq:interior-face-data}
\end{equation}
We have therefore obtained the required Schur equality on every accessible
face of \(D_{\alpha_r}\).  Theorem \ref{thm:flat-cell-recovery} gives
\begin{equation}
  P_r^{(1)}=P_r^{(2)},
\end{equation}
and hence \(\sigma^{(1)}=\sigma^{(2)}\) in \(D_{\alpha_r}\), completing the
induction.
\end{proof}

\begin{corollary}
\label{cor:uniform-degree}
Suppose \(N_\alpha\le N\) for all cells.  If the subdivision admits an
admissible flat-face recovery order relative to \(\Sigma\) in which each cell is
accessible through at least \(N+3\) flat faces whose supporting hyperplanes
satisfy the corresponding genericity and triple-intersection geometry, and the
recovered regions satisfy the stated face-connectedness assumptions, then the
following holds.  For any
\(\sigma^{(1)},\sigma^{(2)}\in\mathcal P(\mathcal D,N)\) that satisfy the
coefficient-dependent triple-region positivity and auxiliary half-ball
ellipticity hypotheses of Theorem \ref{thm:main} for this recovery order,
equality of their local DN maps on \(\Sigma\) implies
\(\sigma^{(1)}=\sigma^{(2)}\).  Equivalently, the local DN map is injective on
the subclass of \(\mathcal P(\mathcal D,N)\) satisfying those
coefficient-dependent admissibility conditions.
\end{corollary}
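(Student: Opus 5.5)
The corollary is a specialization of Theorem \ref{thm:main}, so the plan is to check that its hypotheses place us inside that theorem's setting and then invoke it. The only real point is to reconcile the uniform bound \(N\) with the cell-wise bounds \(N_\alpha\) used in Definition \ref{def:stripping-order}.

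First, since each \(P_\alpha\) lies in \(\operatorname{Poly}_{\le N}\), we regard \(\sigma^{(1)},\sigma^{(2)}\) as elements of \(\mathcal P(\mathcal D,\{N_\alpha\})\) with every \(N_\alpha\) replaced by \(N\). This is harmless, because \(\mathcal P(\mathcal D,N)\) is exactly that class with constant degree bound.

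Second, we verify that the assumed recovery order is admissible in the sense of Definition \ref{def:stripping-order} for the constant degree bounds \(N_\alpha=N\).
\begin{itemize}
\item Condition (iii) requires \(M_r\ge N+3\) and \(N\)-genericity of the supporting hyperplanes. Both are supplied by the hypothesis that each cell is accessible through at least \(N+3\) flat faces in generic position.
\item Condition (iv) asks for the triple-intersection sets \(W_{r,ijk}\); these are part of the assumed ``triple-intersection geometry''.
\item Conditions (i), (ii) and (v) concern face-connectedness, the auxiliary half-balls and the location of the patches. They are independent of degree and are assumed verbatim.
\end{itemize}
The coefficient-dependent requirements of Theorem \ref{thm:main} are likewise assumed directly: positivity of the polynomial extensions on the sets \(W_{r,ijk}\), and ellipticity in the half-balls.

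Third, Theorem \ref{thm:main}, applied with \(N_\alpha=N\), yields \(\sigma^{(1)}=\sigma^{(2)}\) whenever \(\DN^\Omega_{\sigma^{(1)},\Sigma}=\DN^\Omega_{\sigma^{(2)},\Sigma}\). The injectivity statement is simply a restatement of this implication on the stated subclass.

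The only step needing care is bookkeeping rather than analysis. The corollary is phrased with ``\(N_\alpha\le N\)'', while the order is assumed admissible with \(N+3\) faces. If instead one wanted to use the smaller true degrees \(N_\alpha\), one should note that \(N\)-genericity implies \(N_\alpha\)-genericity. To see this, take subsets of the witnessing sets \(J_i\) of size \(N_\alpha+2\): conditions (a) and (b) of Definition \ref{def:N-generic} are inherited by subsets, because a simple arrangement remains simple after deleting hyperplanes. Either route closes the argument, and I would use the simpler uniform-degree route.
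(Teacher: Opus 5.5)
Your argument is correct and is the intended one. The paper gives no separate proof; the corollary is Theorem \ref{thm:main} applied with the constant degree bounds $N_\alpha=N$, with the admissibility and coefficient-dependent hypotheses carried over exactly as you describe.

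One small caution concerns your aside that the route through the smaller bounds $N_\alpha$ also ``closes the argument.'' That route only covers conductivities that actually lie in $\mathcal P(\mathcal D,\{N_\alpha\})$, not general elements of $\mathcal P(\mathcal D,N)$. The uniform-degree route you chose is the one that proves the stated claim.
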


\appendix

\section{Details of the inner-extension argument}
\label{app:inner-extension-proof}

We prove Proposition \ref{prop:inner-extension} using only global variational
solutions and the ordinary trace space on the selected patch.  All closures and
neighborhoods involving \(\Omega_2\) are relative to \(\Omega_1\).

\subsection{Global traces and variational Green solutions}

Let \(\Omega_2\subset\Omega_1\subset\R^n\) be bounded Lipschitz domains, set
\begin{equation}
  G=\Omega_1\setminus\overline{\Omega_2},
  \qquad
  \Sigma_2=\Omega_1\cap\partial\Omega_2\cap\partial G,
\end{equation}
and put \(X=H^1_0(\Omega_1)\).  The restriction of an element of \(X\) to
\(\Omega_2\) has the standard quotient trace on \(\partial\Omega_2\); see
\cite[Chapters 3--4]{McLean2000}.  Write
\begin{equation}
  z\sim_{\Sigma_2}w
  \quad\Longleftrightarrow\quad
  \operatorname{Tr}_{\partial\Omega_2}(z-w)=0.
  \label{eq:internal-trace-equivalence}
\end{equation}
Because global elements of \(X\) have zero trace on \(\partial\Omega_1\), the
only variable part of this trace is the clipped interface \(\Sigma_2\).  Thus
\eqref{eq:internal-trace-equivalence} is precisely equality of the internal
trace, expressed as a relation between actual global representatives.  Let
\begin{equation}
  N=\{z\in X:z\sim_{\Sigma_2}0\},
  \qquad
  \mathcal T=N^\perp\subset X^*.
  \label{eq:interface-annihilator}
\end{equation}
The elements of \(\mathcal T\) are the bounded interface functionals.  This is
the usual identification \((X/N)^*=N^\perp\), obtained from Hahn--Banach; it
does not assert density of smooth functions in a dual space on the full
interface.

Let \(\sigma^{(j)}\), \(j=0,1,2\), denote the reference and the two unknown
coefficients, with \(\sigma^{(1)}=\sigma^{(2)}=\sigma^0\) in \(G\).  For the
reference coefficient in \(\Omega_2\) we use any fixed uniformly elliptic
extension.

\begin{lemma}
\label{lem:variational-green-sources}
For \(j=0,1,2\) and every \(F\in X^*=H^{-1}(\Omega_1)\), there is a unique
\(\mathcal G_jF\in X\) such that
\begin{equation}
  \int_{\Omega_1}\sigma^{(j)}\nabla\mathcal G_jF\cdot\nabla\phi\,dx
  =F(\phi),
  \qquad \phi\in X.
  \label{eq:variational-green}
\end{equation}
If \(\lambda_j\) is an ellipticity lower bound, then
\begin{equation}
  \|\mathcal G_jF\|_{H^1_0(\Omega_1)}
  \le C\lambda_j^{-1}\|F\|_{H^{-1}(\Omega_1)}.
  \label{eq:green-coercive-estimate}
\end{equation}
Moreover, each \(\mathcal G_j\) is symmetric:
\begin{equation}
  H(\mathcal G_jF)=F(\mathcal G_jH),
  \qquad F,H\in X^*.
  \label{eq:green-symmetry}
\end{equation}
\end{lemma}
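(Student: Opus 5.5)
The plan is to apply the Lax--Milgram theorem to the bilinear form
\begin{equation*}
  a_j(u,\phi)=\int_{\Omega_1}\sigma^{(j)}\nabla u\cdot\nabla\phi\,dx
\end{equation*}
on \(X=H^1_0(\Omega_1)\). The argument has three short steps: existence and uniqueness, the energy bound, and symmetry.

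\emph{Boundedness and coercivity.} Boundedness of \(a_j\) follows from \(\sigma^{(j)}\in L^\infty\): \(|a_j(u,\phi)|\le\|\sigma^{(j)}\|_{L^\infty}\|\nabla u\|_{L^2}\|\nabla\phi\|_{L^2}\). For coercivity, uniform ellipticity \(\sigma^{(j)}\xi\cdot\xi\ge\lambda_j|\xi|^2\) gives \(a_j(u,u)\ge\lambda_j\|\nabla u\|_{L^2}^2\). Since \(\Omega_1\) is bounded, the Poincar\'e inequality \(\|u\|_{L^2}\le C_P\|\nabla u\|_{L^2}\) then yields
\begin{equation*}
  a_j(u,u)\ge \lambda_j(1+C_P^2)^{-1}\|u\|_{H^1}^2 .
\end{equation*}
Lax--Milgram now gives a unique \(\mathcal G_jF\in X\) satisfying \eqref{eq:variational-green} for each \(F\in X^*\). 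This covers \(j=0\) as well, since the reference coefficient is a fixed uniformly elliptic extension.

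\emph{Energy estimate.} Test \eqref{eq:variational-green} with \(\phi=u=\mathcal G_jF\):
\begin{equation*}
  \lambda_j\|\nabla u\|_{L^2}^2\le a_j(u,u)=F(u)\le\|F\|_{H^{-1}}\|u\|_{H^1}\le(1+C_P^2)^{1/2}\|F\|_{H^{-1}}\|\nabla u\|_{L^2}.
\end{equation*}
Dividing by \(\|\nabla u\|_{L^2}\) and using the equivalence of \(\|\nabla u\|_{L^2}\) with the \(H^1\) norm on \(X\) gives \eqref{eq:green-coercive-estimate}. The constant \(C\) depends only on \(\Omega_1\), through \(C_P\). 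If one uses the gradient norm on \(H^1_0\), then \(C=1\) up to the choice of dual norm.

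\emph{Symmetry.} Here we use \(\sigma^{(j)}=(\sigma^{(j)})^T\) pointwise, so \(a_j(u,\phi)=a_j(\phi,u)\). Taking \(\phi=\mathcal G_jH\) in the defining identity for \(\mathcal G_jF\), and \(\phi=\mathcal G_jF\) in the identity for \(\mathcal G_jH\), we get
\begin{equation*}
  F(\mathcal G_jH)=a_j(\mathcal G_jF,\mathcal G_jH)=a_j(\mathcal G_jH,\mathcal G_jF)=H(\mathcal G_jF),
\end{equation*}
which is \eqref{eq:green-symmetry}.

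There is no real obstacle. The only point needing care is fixing the norm convention on \(H^1_0(\Omega_1)\) and its dual, so that the constant in \eqref{eq:green-coercive-estimate} is seen to depend only on the domain and not on the coefficient.
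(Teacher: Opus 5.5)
Your proposal is correct and follows essentially the same route as the paper: Lax--Milgram on \(H^1_0(\Omega_1)\) with coercivity from uniform ellipticity and Poincar\'e, the energy bound by testing with the solution itself, and symmetry by testing each equation against the other solution, using \(\sigma^{(j)}\in L^\infty(\Omega_1;\Sym_n)\). Your explicit tracking of the Poincar\'e constant, which shows that \(C\) depends only on \(\Omega_1\), simply fills in details the paper leaves to the Evans citation.
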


\begin{proof}
Poincar\'e's inequality and uniform ellipticity make the bilinear form in
\eqref{eq:variational-green} bounded and coercive on \(X\).  Lax--Milgram gives
the solution and \eqref{eq:green-coercive-estimate}; see, for example,
\cite[Chapter 6]{Evans2010}.  Testing the equations for \(F\) and \(H\) against
the opposite solutions proves \eqref{eq:green-symmetry}.
\end{proof}

A \emph{compact known-side source} will mean either a functional represented
by a density in \(C_c^\infty(G)\), or a smooth density on a flat surface
parallel to one interface face whose support is compactly contained in \(G\).
In the second case the surface and its support are chosen at positive distance
from \(\Omega_2\).  Both types belong to \(X^*\) by the local trace theorem.  If
\(p\in C_c^\infty(\Gamma_2)\) and flat coordinates put its face at \(x_n=0\),
the offset functionals
\begin{equation}
  F_{p,\varepsilon}(\phi)
  =\int p(x')\phi(x',\varepsilon)\,dx'
  \label{eq:offset-source}
\end{equation}
are compact known-side sources for sufficiently small \(\varepsilon>0\).  The
one-dimensional fundamental theorem of calculus gives
\begin{equation}
  \|F_{p,\varepsilon}-F_p\|_{X^*}
  \le C\varepsilon^{1/2}\|p\|_{L^2},
  \qquad
  F_p(\phi)=\int_{\Gamma_2}p\,\operatorname{Tr}\phi\,dS .
  \label{eq:offset-source-limit}
\end{equation}
This approximation is asserted only for an individual smooth face density; no
norm-density claim for the full interface dual is used.

\subsection{Unique continuation and Runge approximation}

A union of cells is \emph{face-connected} if any two of its cells can be joined
by a chain in which consecutive cells share a nonempty relatively open flat
face.  The following unique continuation result is the scalar conductivity
analogue of the Holmgren argument in \cite{CarsteaHondaNakamura2018}.

\begin{lemma}
\label{lem:piecewise-poly-ucp}
Let \(G\) be a face-connected finite union of Lipschitz subdomains whose
pairwise interfaces contain relatively open flat faces.  Assume that on each
subdomain \(E\subset G\), the coefficient \(\sigma^0\) is the restriction of a
symmetric matrix polynomial which remains uniformly elliptic in a neighborhood
of \(\overline E\).  The union may include one auxiliary analytic collar or
ball attached to a flat exterior face, with \(\sigma^0\) given there by the same
polynomial extension as in the adjacent cell of the known region.  If
\(u\in H^1_{\mathrm{loc}}(G)\) satisfies
\begin{equation}
  \diver(\sigma^0\nabla u)=0\quad\text{in }G
\end{equation}
and \(u\) vanishes in a nonempty open subset of \(G\), then \(u\equiv0\) in
\(G\).
\end{lemma}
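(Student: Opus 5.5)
The plan is to combine interior unique continuation inside each cell with a Holmgren-type crossing of the flat interfaces, and then propagate the zero set along the face-connecting chain.

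\emph{Step 1: zero set within one cell.} Let \(Z\subset G\) be the interior of the zero set of \(u\). It suffices to show that if \(Z\) meets a cell \(E\) (or the auxiliary collar), then \(u\equiv0\) in \(E\), and that \(u\equiv0\) in \(E\) forces \(u\equiv0\) in every cell sharing an open flat face with \(E\). Face-connectedness then propagates vanishing to all of \(G\). Inside a single cell \(\sigma^0\) is a polynomial, hence real analytic, and uniformly elliptic. Elliptic regularity makes \(u\) real analytic in \(E\), since \(E\) is connected. Therefore vanishing on an open subset of \(E\) gives \(u\equiv0\) in \(E\). The collar is treated the same way: there \(\sigma^0\) is the same polynomial as in the adjacent cell, so the union of the cell and the collar is a connected open set carrying one analytic coefficient, and the argument applies to the union.

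\emph{Step 2: crossing a flat face.} Let \(E,E'\) share a relatively open flat patch \(\Gamma\subset H\), and choose a ball \(B\) centered on \(\Gamma\) with \(B\cap E\) and \(B\cap E'\) the two half-balls. Suppose \(u=0\) in \(E\). Then the trace of \(u\) from the \(E'\) side vanishes on \(\Gamma\cap B\), because \(u\in H^1\) across the interface. The weak transmission condition says the conormal derivative \(\sigma^0_{E'}\nabla u\cdot\nu\) from the \(E'\) side equals that from the \(E\) side, which is zero. This is obtained by testing the weak equation with functions supported in \(B\).

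So \(u|_{E'\cap B}\) solves an elliptic equation with coefficient \(P_{E'}\) (analytic up to \(H\)) and has zero Cauchy data on a flat analytic hypersurface. Extend \(u\) by zero across \(H\) into \(B\cap E\), and call the extension \(\tilde u\). Then \(\tilde u\in H^1(B)\) and it satisfies \(\diver(P_{E'}\nabla\tilde u)=0\) weakly in \(B\). The interface integral drops out precisely because both the trace and the conormal derivative vanish. Here \(P_{E'}\) is uniformly elliptic in a neighborhood of \(\overline{E'}\), and hence in a small enough \(B\).

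Now \(\tilde u\) is a solution with a single analytic elliptic coefficient on \(B\) that vanishes on an open half-ball. By Step 1 (analyticity) it vanishes in \(B\), so \(u=0\) on \(B\cap E'\). Step 1 applied in \(E'\) then gives \(u\equiv0\) in \(E'\).

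\emph{Main obstacle.} The delicate point is justifying that the weak conormal derivative on the \(E'\) side vanishes and that the zero extension is a weak solution, when \(u\) has only \(H^1_{\mathrm{loc}}\) regularity. I will handle this directly in weak form. For \(\phi\in C_c^\infty(B)\), split
\[
  \int_B P\nabla u\cdot\nabla\phi
  =\int_{B\cap E}\sigma^0\nabla u\cdot\nabla\phi
  +\int_{B\cap E'}\sigma^0\nabla u\cdot\nabla\phi .
\]
The first integral is \(0\) since \(u=0\) in \(E\). The total is \(0\) by the equation. So \(\int_{B\cap E'}P_{E'}\nabla u\cdot\nabla\phi=0\) for all \(\phi\in C_c^\infty(B)\), which is exactly the weak equation for \(\tilde u\) on \(B\). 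Together with \(\tilde u\in H^1(B)\), which follows from the zero trace, this completes the crossing.

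Finally, I will note that connectedness of \(G\) in the face-connected sense guarantees that the chain reaches every cell. Any set of measure zero, such as skeleton edges, is irrelevant, since \(u\in H^1_{\mathrm{loc}}\) vanishing a.e. in every cell vanishes in \(G\).
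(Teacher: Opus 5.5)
Your proposal is correct and follows essentially the paper's route: real analyticity and the identity theorem inside each cell, and, at a shared flat face, the fact that \(u\) (already zero on \(B\cap E\)) is a weak solution of the single analytic equation \(\diver(P_{E'}\nabla u)=0\) on a full ball \(B\). The differences are only cosmetic. You finish with the identity theorem on \(B\) instead of Holmgren's theorem; the paper also proves analyticity of the extension first, so Holmgren is not really needed. Your direct weak-form check, which uses \(\nabla u=0\) a.e.\ on \(B\cap E\) to swap the coefficient there, replaces the trace and conormal-trace bookkeeping.
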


\begin{proof}
On each subdomain the coefficient is real analytic and uniformly elliptic.
Analytic elliptic regularity makes the solution real analytic in the interior,
and analytic uniqueness then propagates vanishing from a nonempty open set
throughout that subdomain; see
\cite[Theorem 8.6.5, p.~309]{Hormander1983} (with the analytic-ellipticity
conventions on pp.~271 and 306).

It remains to cross a flat interface.  Let neighboring
subdomains \(E_-\) and \(E_+\) share a relatively open flat face \(F\), and
suppose that \(u\) vanishes in \(E_-\) up to a nonempty subpatch
\(F_0\Subset F\).  Since \(u\in H^1_{\mathrm{loc}}\) is a weak solution across
\(F\), the two traces of \(u\) agree in \(H^{1/2}_{\mathrm{loc}}(F)\), and the
conormal fluxes agree in \(H^{-1/2}_{\mathrm{loc}}(F)\): for every test function
supported near \(F_0\), integration by parts on the two sides gives
\begin{equation}
  [u]_{F_0}=0,
  \qquad
  [\sigma^0\nabla u\cdot\nu]_{F_0}=0 .
  \label{eq:transmission-traces-ucp}
\end{equation}
The trace and conormal derivative vanish on the \(E_-\)-side because \(u=0\)
there.  They therefore vanish on the \(E_+\)-side as well.  The tangential
derivatives of the zero trace vanish, and ellipticity recovers the normal
derivative from the conormal derivative.  Thus \(u|_{E_+}\) has zero Cauchy
data on the noncharacteristic analytic hypersurface \(F_0\).

Extend the polynomial coefficient from \(E_+\) across \(F_0\), and extend
\(u|_{E_+}\) by zero.  The zero Dirichlet trace makes this an \(H^1\) extension,
and the zero conormal trace removes the interface term from the weak equation.
The extension is therefore a weak solution of an analytic uniformly elliptic
equation in a full neighborhood of \(F_0\).  Analytic elliptic regularity
applies before any use of Cauchy uniqueness and makes the extension real
analytic there.

Holmgren's theorem \cite{John1982} gives \(u=0\) on the \(E_+\)-side near
\(F_0\), and analytic continuation gives \(u=0\) throughout \(E_+\).  Iteration
along a face chain proves the lemma.
\end{proof}

\begin{lemma}
\label{lem:runge-known-shell}
Assume that \(G=\Omega_1\setminus\overline{\Omega_2}\) is a face-connected
finite union of cells as in Lemma \ref{lem:piecewise-poly-ucp}, and that
\(\Gamma_1\subset\Sigma_1\) contains a relatively compact nonempty flat
subpatch in the following explicit sense: there is a ball \(B\) such that,
after an affine change of coordinates in \(B\),
\begin{equation}
  \Omega_1\cap B=\{x_n>0\}\cap B,
  \qquad
  B\cap\partial\Omega_1=\{x_n=0\}\cap B\Subset \Gamma_1,
\end{equation}
and the coefficient in the cell of the known region adjacent to \(B\cap\partial\Omega_1\)
extends as a uniformly elliptic polynomial coefficient to the whole ball
\(B\).  Let \(\sigma^0\) be a known
uniformly elliptic piecewise polynomial coefficient on \(\Omega_1\), agreeing
with the already recovered coefficient in \(G\), and satisfying the hypotheses
of Lemma \ref{lem:piecewise-poly-ucp} in \(G\).  If
\(V\subset\Omega_1\) is relatively open and
\begin{equation}
  \overline{\Omega_2}^{\,\Omega_1}:=\overline{\Omega_2}\cap\Omega_1
  \subset V,
\end{equation}
and if \(u\in H^1_0(\Omega_1)\) satisfies
\begin{equation}
  \diver(\sigma^0\nabla u)=0\quad\text{in }V,
\end{equation}
where the equation in \(V\) is understood in the relative sense inside
\(\Omega_1\), then there are functions \(u_k\in H^1(\Omega_1)\), solving
\begin{equation}
  \diver(\sigma^0\nabla u_k)=0\quad\text{in }\Omega_1,
\end{equation}
whose boundary traces are supported in \(\Gamma_1\), such that
\begin{equation}
  u_k\to u\quad\text{in }H^1(\Omega_2).
\end{equation}
\end{lemma}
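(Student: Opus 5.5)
We prove Lemma \ref{lem:runge-known-shell} by the standard Hahn--Banach duality argument, with Lemma \ref{lem:piecewise-poly-ucp} as the unique continuation input. Let
\[
\mathcal R=\{u_k|_{\Omega_2}: u_k\in H^1(\Omega_1),\ \diver(\sigma^0\nabla u_k)=0 \text{ in }\Omega_1,\ \supp \operatorname{Tr} u_k\subset\Gamma_1\}.
\]
The traces range over \(C_c^\infty(B\cap\partial\Omega_1)\), or over \(H^{1/2}_0(\Gamma_1)\). It suffices to show that every bounded functional \(\Phi\) on \(H^1(\Omega_2)\) that annihilates \(\mathcal R\) also annihilates the restriction of \(u\). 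We first regard \(\Phi\) as an element of \(X^*\) by setting \(\Phi(\phi)=\Phi(\phi|_{\Omega_2})\). Its support then lies in \(\overline{\Omega_2}^{\,\Omega_1}\). Next let \(w=\mathcal G_0\Phi\in X\) be the variational Green solution from Lemma \ref{lem:variational-green-sources}. Then \(w\) solves \(\diver(\sigma^0\nabla w)=0\) in \(G\) and has zero Dirichlet trace on \(\partial\Omega_1\).

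\emph{Step 1: the annihilation condition forces zero Cauchy data of \(w\) on \(\Gamma_1\).} Let \(u_f\) be a solution with trace \(f\in C_c^\infty(B\cap\partial\Omega_1)\), and let \(E f\) be an extension of \(f\) supported near \(B\). The function \(u_f-Ef\) lies in \(X\), so \(\Phi(u_f)=\Phi(u_f-Ef)=\int\sigma^0\nabla w\cdot\nabla(u_f-Ef)\) when \(Ef\) is supported away from \(\Omega_2\). We then use the equation for \(u_f\) and \(w\in X\) to get \(\int\sigma^0\nabla w\cdot\nabla u_f=\int\sigma^0\nabla u_f\cdot\nabla w=0\). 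Hence \(0=\Phi(u_f)=-\int\sigma^0\nabla w\cdot\nabla Ef\), which is, up to sign, the weak conormal derivative \(\langle\sigma^0\nabla w\cdot\nu,f\rangle\). So \(w\) has vanishing trace and vanishing conormal derivative on \(B\cap\partial\Omega_1\).

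\emph{Step 2: unique continuation kills \(w\) in \(G\).} We extend \(w\) by zero into the exterior half of \(B\), and extend the coefficient there by the polynomial extension of the adjacent cell, which is uniformly elliptic on \(B\) by hypothesis. Because both Cauchy data vanish, the extended \(w\) is a weak solution in \(G\cup B\), and it vanishes on the open set \(B\cap\{x_n<0\}\). Lemma \ref{lem:piecewise-poly-ucp}, applied with the auxiliary ball as the allowed analytic attachment, gives \(w\equiv0\) in \(G\).

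\emph{Step 3: conclude \(\Phi(u)=0\).} We have \(\Phi(u)=\int_{\Omega_1}\sigma^0\nabla w\cdot\nabla u\). The integrand vanishes on \(G\), so this reduces to an integral over \(\Omega_1\setminus G=\overline{\Omega_2}\cap\Omega_1\). To show this integral is zero, choose a cutoff \(\chi\in C^\infty\) with \(\chi=1\) on a neighborhood of \(\overline{\Omega_2}^{\,\Omega_1}\) and \(\supp\chi\cap\Omega_1\subset V\); this is possible because \(V\) is a relative neighborhood of \(\overline{\Omega_2}^{\,\Omega_1}\). Then \(\chi w=w\), since \(w=0\) on \(G\), and \(\chi w\in X\) is supported in \(V\). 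The equation for \(u\) in \(V\), tested against \(\chi w\), gives \(\int\sigma^0\nabla u\cdot\nabla w=0\), and \(\sigma^0\) is symmetric. Therefore \(\Phi(u)=0\), and Hahn--Banach yields the approximating sequence \(u_k\). Solutions with traces in \(H^{1/2}_0(\Gamma_1)\) are limits of solutions with smooth traces, by continuity of the solution map, so the smooth class suffices.

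The main obstacle is Step 1 near \(\partial\Omega_2\cap\partial\Omega_1\) and the justification of the weak conormal derivative for \(w\in H^1\), since \(\Phi\) is only an \(H^{-1}\) functional. We handle this by choosing \(B\) at positive distance from \(\overline{\Omega_2}\), shrinking \(B\) if necessary, which is allowed because the subpatch \(B\cap\partial\Omega_1\Subset\Gamma_1\subset\partial G\). A second delicate point is the zero-extension across the flat patch in Step 2. This is exactly the transmission argument already carried out in the proof of Lemma \ref{lem:piecewise-poly-ucp}, so no new estimate is needed there.
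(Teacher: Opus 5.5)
Your proof is correct, but it runs the duality argument differently from the paper.

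\emph{The paper's route.} The paper enlarges the domain to $\Omega_0=\Omega_1\cup B$. Its approximants are the Green solutions $\mathcal G_{\Omega_0}F$ with sources $F$ supported in the exterior half-ball $B^-$, and its annihilator solution is $w=\mathcal G_{\Omega_0}\widetilde\ell$ on $\Omega_0$. Then $w=0$ in $B^-$ follows at once from $F(w)=0$, and $w$ is automatically a weak solution across the patch, so no boundary computation is needed.

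\emph{Your route.} You keep $w=\mathcal G_0\Phi$ on $\Omega_1$ itself. You test against Dirichlet solutions with data in $C_c^\infty(B\cap\partial\Omega_1)$ to obtain $\int_{\Omega_1}\sigma^0\nabla w\cdot\nabla\psi=0$ for all $\psi\in C_c^\infty(B)$, where zero trace comes free from $w\in H^1_0(\Omega_1)$. You then extend $w$ by zero across the patch. This is the classical Cauchy-data form of Runge approximation.

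\emph{What each buys.} Your version reuses the Green operator already present in the appendix. Its approximants have smooth traces compactly supported in the patch, so they lie directly in $H^{1/2}_0(\Gamma_1)$, the domain of the local DN map needed in Lemma \ref{lem:compact-green-pairings}. The paper's approximants have only $H^{1/2}$ traces supported in $\overline{B\cap\partial\Omega_1}$, and they enter that space only after a routine mollification on the flat patch.

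The price of your version is the requirement $B\cap\Omega_2=\emptyset$, so that $\psi|_{\Omega_2}=0$. The paper needs the same shrinking implicitly, to make $w$ homogeneous near the patch and $B\cap\Omega_1$ part of a cell of $G$.

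\emph{A small point.} The cutoff $\chi$ in Step 3 is superfluous. A smooth one need not even exist if $\overline{\Omega_2}^{\,\Omega_1}$ reaches $\partial\Omega_1$ and $V$ narrows there. Since $w$ vanishes on $G$, it is already supported in $\overline{\Omega_2}^{\,\Omega_1}\subset V$, which is exactly the fact the paper uses.
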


\begin{proof}
Shrink \(B\), if necessary, without losing the flatness and extension
properties, and set
\begin{equation}
  \Omega_0=\Omega_1\cup B .
\end{equation}
In the chosen coordinates, \(\Omega_0\) is obtained from \(\Omega_1\) by
attaching the exterior half-ball
\(B^-:=B\cap\{x_n<0\}\) across the flat patch
\(B\cap\partial\Omega_1\).  Extend \(\sigma^0\) to \(B\) by the polynomial from
the adjacent cell.  The half-ball is then an auxiliary analytic cell of the
kind allowed in Lemma \ref{lem:piecewise-poly-ucp}.

Let \(\mathcal G_{\Omega_0}\) denote the Green operator for the extended
coefficient \(\sigma^0\) on \(\Omega_0\) with zero Dirichlet boundary
condition.  Consider the subspaces of \(H^1(\Omega_2)\)
\begin{align}
  X&=\{v|_{\Omega_2}:\ v\in H^1_0(\Omega_1),\
        \diver(\sigma^0\nabla v)=0\text{ in }V\},\\
  Y&=\{(\mathcal G_{\Omega_0}F)|_{\Omega_2}:\ 
        F\in H^{-1}(\Omega_0),\
        \supp F\Subset B^-\}.
\end{align}
Every element of \(Y\) extends to a solution in \(\Omega_1\) whose boundary
trace is supported in \(B\cap\partial\Omega_1\Subset\Gamma_1\).  It therefore
suffices to show that \(Y\) is dense in \(X\).

Let \(\ell\in (H^1(\Omega_2))^*\) annihilate \(Y\), and extend it to
\(\widetilde\ell\in H^{-1}(\Omega_0)\) by
\begin{equation}
  \widetilde\ell(\phi)=\ell(\phi|_{\Omega_2}),
  \qquad \phi\in H^1_0(\Omega_0).
\end{equation}
Let \(w=\mathcal G_{\Omega_0}\widetilde\ell\).  For every
\(F\in H^{-1}(\Omega_0)\) with \(\supp F\Subset B^-\), self-adjointness gives
\begin{equation}
  0=\ell((\mathcal G_{\Omega_0}F)|_{\Omega_2})
   =\widetilde\ell(\mathcal G_{\Omega_0}F)
   =F(w).
\end{equation}
Thus \(w=0\) in \(B^-\).  The distribution \(\widetilde\ell\) is supported in
\(\overline{\Omega_2}^{\,\Omega_1}\), so \(w\) is homogeneous in \(G\cup B^-\).
Lemma \ref{lem:piecewise-poly-ucp} applied to this face-connected union gives
\begin{equation}
  w=0\quad\text{in }G
\end{equation}
as well as in the auxiliary half-ball.

Take \(v|_{\Omega_2}\in X\) and extend \(v\) by zero from \(\Omega_1\) to
\(\Omega_0\).  Because \(w\in H^1_0(\Omega_0)\) vanishes in \(G\), its
restriction to \(\Omega_2\) has zero trace on \(\Sigma_2\) and on the part of
\(\partial\Omega_2\) lying in \(\partial\Omega_1\).  Extending this restriction
by zero to \(\Omega_1\) gives a valid test function for the equation satisfied
by \(v\) in \(V\).  Therefore
\begin{equation}
  \ell(v|_{\Omega_2})
  =\widetilde\ell(v)
  =\int_{\Omega_0}\sigma^0\nabla w\cdot\nabla v\,dx
  =\int_{\Omega_2}\sigma^0\nabla w\cdot\nabla v\,dx
  =0.
\end{equation}
Thus every functional that annihilates \(Y\) also annihilates \(X\).
Hahn--Banach proves the required density.
\end{proof}

\subsection{Compact Green pairings and internal conormal relations}

\begin{lemma}
\label{lem:compact-green-pairings}
Under the hypotheses of Proposition \ref{prop:inner-extension}, equality of the
outer local DN maps gives
\begin{equation}
  H(\mathcal G_1F)=H(\mathcal G_2F)
  \label{eq:compact-green-pairings}
\end{equation}
for every pair of compact sources \(F,H\) in the known region.
\end{lemma}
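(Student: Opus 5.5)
The plan is to reduce the Green pairings to pairings between solutions generated from the measured patch $\Gamma_1$, where the outer DN identity applies directly. Fix compact known-side sources $F,H$, with supports compactly contained in $G$ at positive distance from $\Omega_2$.

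\textbf{Step 1: an integral identity.} Set $w=\mathcal G_1F-\mathcal G_2F\in X$. Then $\diver(\sigma^{(j)}\nabla\mathcal G_jF)=-F$ holds in the weak sense described by \eqref{eq:variational-green}. Let $v\in H^1(\Omega_1)$ be any solution of $\diver(\sigma^{(1)}\nabla v)=0$ whose trace $f$ is supported in $\Gamma_1$, and let $\tilde v$ be the $\sigma^{(2)}$-solution with the same trace. Equality of the local DN maps \eqref{eq:outer-local-equality-inner-section} gives, by the standard Alessandrini-type identity,
\begin{equation*}
\int_{\Omega_1}(\sigma^{(1)}-\sigma^{(2)})\nabla v\cdot\nabla\tilde v\,dx=0 .
\end{equation*}
Since $\sigma^{(1)}=\sigma^{(2)}$ in $G$, this integral lives only on $\Omega_2$. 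What I actually need, however, is the mixed pairing with the Green functions.

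\textbf{Step 2: pairing against sources.} By symmetry \eqref{eq:green-symmetry}, $H(\mathcal G_jF)=\int_{\Omega_1}\sigma^{(j)}\nabla\mathcal G_jF\cdot\nabla\mathcal G_jH\,dx$. A direct computation using the weak equations gives
\begin{equation*}
H(\mathcal G_1F)-H(\mathcal G_2F)=\int_{\Omega_2}(\sigma^{(2)}-\sigma^{(1)})\nabla\mathcal G_1F\cdot\nabla\mathcal G_2H\,dx .
\end{equation*}
To verify this, test the $\mathcal G_1F$ equation with $\mathcal G_2H$, test the $\mathcal G_2H$ equation with $\mathcal G_1F$, and subtract; both are legitimate test functions in $X$.

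\textbf{Step 3: Runge approximation.} The function $\mathcal G_1F\in H^1_0(\Omega_1)$ solves the $\sigma^{(1)}$ equation in the open set $V=\Omega_1\setminus\supp F$, which contains $\overline{\Omega_2}^{\,\Omega_1}$. I would apply Lemma \ref{lem:runge-known-shell}, with $\sigma^0$ replaced by $\sigma^{(1)}$, which equals the known piecewise polynomial $\sigma^0$ in $G$. This produces $\sigma^{(1)}$-solutions $v_k$ in $\Omega_1$ with traces supported in $\Gamma_1$ and $v_k\to\mathcal G_1F$ in $H^1(\Omega_2)$. In the same way I get $\sigma^{(2)}$-solutions $\tilde v_k\to\mathcal G_2H$ in $H^1(\Omega_2)$.

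\textbf{The main obstacle.} Lemma \ref{lem:runge-known-shell} was stated for $\sigma^0$, yet its proof used only the unique continuation of Lemma \ref{lem:piecewise-poly-ucp} in $G\cup B^-$. That region is where $\sigma^{(j)}=\sigma^0$, so the density argument should carry over verbatim for each $\sigma^{(j)}$. The Green operator $\mathcal G_{\Omega_0}$ must then be built with $\sigma^{(j)}$ inside $\Omega_2$ and the polynomial extension on $B$. This is the point I would check carefully.

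\textbf{Conclusion.} Step 1 gives $\int_{\Omega_2}(\sigma^{(1)}-\sigma^{(2)})\nabla v_k\cdot\nabla\tilde v_l\,dx=0$ for all $k,l$, where $v_k$ solves the $\sigma^{(1)}$ equation and $\tilde v_l$ the $\sigma^{(2)}$ equation. This uses the version of the identity with independent traces on $\Gamma_1$: expand both DN forms and use the symmetry of the DN maps. Passing to the limit in $k$ and $l$ with the $L^\infty$ bound on $\sigma^{(1)}-\sigma^{(2)}$ makes the right side of Step 2 vanish, which proves \eqref{eq:compact-green-pairings}.
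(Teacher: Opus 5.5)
Your argument is correct, but it takes a different route from the paper's.

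\textbf{What the paper does.} It never Runge-approximates the Green solutions of the unknown coefficients. It fixes the reference coefficient $\sigma^0$, extended arbitrarily but elliptically into $\Omega_2$. It approximates $\mathcal G_0F$ and $\mathcal G_0H$ in $H^1(\Omega_2)$ by $\sigma^0$-solutions $U_k,V_k$ driven from $\Gamma_1$. It then introduces the perturbations $W_{j,k}=Z_{j,k}-U_k$ and shows $W_{j,k}\to \mathcal G_jF-\mathcal G_0F$ by the energy estimate. This yields the explicit formula $H(\mathcal G_jF)=H(\mathcal G_0F)-\lim_k\langle(\DN^{\Omega_1}_{\sigma^{(j)},\Gamma_1}-\DN^{\Omega_1}_{\sigma^0,\Gamma_1})U_k|_{\partial\Omega_1},V_k|_{\partial\Omega_1}\rangle$.

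\textbf{What you do.} You combine the mixed Green identity $H(\mathcal G_1F)-H(\mathcal G_2F)=\int_{\Omega_2}(\sigma^{(2)}-\sigma^{(1)})\nabla\mathcal G_1F\cdot\nabla\mathcal G_2H\,dx$ with the Alessandrini identity for a $\sigma^{(1)}$-solution and a $\sigma^{(2)}$-solution having independent traces on $\Gamma_1$. This requires Runge approximation separately for each unknown coefficient.

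\textbf{The point you flagged holds.} In the proof of Lemma~\ref{lem:runge-known-shell}, the coefficient inside $\Omega_2$ enters only through Lax--Milgram and the symmetry of $\mathcal G_{\Omega_0}$. The final step also needs $w$ and $v$ to solve equations with the same coefficient, which is automatic if you use $\sigma^{(j)}$ throughout. Unique continuation is invoked only in $G\cup B^-$, where $\sigma^{(j)}$ coincides with $\sigma^0$ and its polynomial extension. So the lemma holds with $\sigma^{(j)}$ in place of $\sigma^0$, even though Proposition~\ref{prop:inner-extension} does not assume $\sigma^{(j)}$ is piecewise polynomial in $\Omega_2$.

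\textbf{Trade-offs.} Your route is shorter and avoids both the reference coefficient and the perturbation estimate for $W_{j,k}$. The paper's route applies Lemma~\ref{lem:runge-known-shell} exactly as stated, only to a known coefficient. It also shows that $H(\mathcal G_jF)$ is expressed through the outer DN map and known data, not merely that the two pairings coincide.

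\textbf{Wording fix.} In Step 1, $\tilde v$ should have an independent trace on $\Gamma_1$ from the start, as you correctly use in the conclusion. The same-trace version is not the one you need.
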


\begin{proof}
Fix compact known-side sources \(F,H\), and set
\(U_0=\mathcal G_0F\) and \(V_0=\mathcal G_0H\).  Because the sources are
supported away from \(\Omega_2\), both functions solve the reference
homogeneous equation in a relative neighborhood of
\(\overline{\Omega_2}^{\,\Omega_1}\).  Lemma \ref{lem:runge-known-shell} gives
reference solutions \(U_k,V_k\) in
\(\Omega_1\), with boundary traces supported in \(\Gamma_1\), such that
\begin{equation}
  U_k\to U_0,
  \qquad
  V_k\to V_0
  \quad\text{in }H^1(\Omega_2).
  \label{eq:runge-approximation-pair}
\end{equation}
For \(j=1,2\), let \(Z_{j,k}\) solve
\(\diver(\sigma^{(j)}\nabla Z_{j,k})=0\) in \(\Omega_1\) with
\(Z_{j,k}|_{\partial\Omega_1}=U_k|_{\partial\Omega_1}\), and set
\(W_{j,k}=Z_{j,k}-U_k\in H^1_0(\Omega_1)\).  Since
\(\sigma^{(j)}-\sigma^0\) is supported in \(\Omega_2\), the function
\(W_{j,k}\) is the unique solution of
\begin{equation}
  \int_{\Omega_1}\sigma^{(j)}\nabla W_{j,k}\cdot\nabla\phi\,dx
  =-\int_{\Omega_2}(\sigma^{(j)}-\sigma^0)
       \nabla U_k\cdot\nabla\phi\,dx,
  \qquad \phi\in H^1_0(\Omega_1).
  \label{eq:Wjk-equation}
\end{equation}
The right-hand side of \eqref{eq:Wjk-equation} converges in
\(H^{-1}(\Omega_1)\), since
\begin{equation}
  \left|
  \int_{\Omega_2}(\sigma^{(j)}-\sigma^0)
       \nabla (U_k-U_0)\cdot\nabla\phi\,dx
  \right|
  \le C\|U_k-U_0\|_{H^1(\Omega_2)}\|\phi\|_{H^1_0(\Omega_1)} .
\end{equation}
The limit is the equation for \(W_j=\mathcal G_jF-\mathcal G_0F\).  The two
Green solutions have the same source, and their coefficients agree in \(G\),
so
\begin{equation}
  \int_{\Omega_1}\sigma^{(j)}\nabla W_j\cdot\nabla\phi\,dx
  =-\int_{\Omega_2}(\sigma^{(j)}-\sigma^0)
       \nabla U_0\cdot\nabla\phi\,dx .
\end{equation}
The energy estimate therefore gives
\begin{equation}
  \|W_{j,k}-W_j\|_{H^1_0(\Omega_1)}
  \le C\|U_k-U_0\|_{H^1(\Omega_2)}\to0.
  \label{eq:Wjk-convergence}
\end{equation}

The local DN identity, applied to the reference solutions \(U_k,V_k\), gives
\begin{align}
 &\langle
 (\DN^{\Omega_1}_{\sigma^{(j)},\Gamma_1}
  -\DN^{\Omega_1}_{\sigma^0,\Gamma_1})
      U_k|_{\partial\Omega_1},V_k|_{\partial\Omega_1}\rangle
      \notag\\
 &\quad =
  \int_{\Omega_2}(\sigma^{(j)}-\sigma^0)
    \nabla (U_k+W_{j,k})\cdot\nabla V_k\,dx .
  \label{eq:dn-difference-pairing}
\end{align}
The convergences in \eqref{eq:runge-approximation-pair} and
\eqref{eq:Wjk-convergence} allow us to pass to the limit:
\begin{equation}
  \int_{\Omega_2}(\sigma^{(j)}-\sigma^0)
    \nabla (U_k+W_{j,k})\cdot\nabla V_k\,dx
  \longrightarrow
  \int_{\Omega_2}(\sigma^{(j)}-\sigma^0)
    \nabla (U_0+W_j)\cdot\nabla V_0\,dx .
  \label{eq:dn-pairing-limit}
\end{equation}
Because \(V_0=\mathcal G_0H\),
\(H(W_j)=\int_{\Omega_1}\sigma^0\nabla V_0\cdot\nabla W_j\,dx\).  Testing the
limit equation for \(W_j\) with \(V_0\) yields
\begin{equation}
  \int_{\Omega_1}\sigma^{(j)}\nabla W_j\cdot\nabla V_0\,dx
  =-\int_{\Omega_2}(\sigma^{(j)}-\sigma^0)
    \nabla U_0\cdot\nabla V_0\,dx .
\end{equation}
Using \(\sigma^{(j)}=\sigma^0\) in \(G\), we obtain
\begin{equation}
  H(W_j)
  =-\int_{\Omega_2}(\sigma^{(j)}-\sigma^0)
    \nabla (U_0+W_j)\cdot\nabla V_0\,dx .
\end{equation}
Together with \eqref{eq:dn-pairing-limit}, this gives
\begin{equation}
  H(W_j)
  =-\lim_{k\to\infty}
  \int_{\Omega_2}(\sigma^{(j)}-\sigma^0)
    \nabla (U_k+W_{j,k})\cdot\nabla V_k\,dx .
  \label{eq:green-pairing-from-dn}
\end{equation}
Hence the outer local DN map and the reference coefficient determine the
pairing \(H(\mathcal G_jF)\).  The two outer maps agree by
\eqref{eq:outer-local-equality-inner-section}; therefore
\eqref{eq:compact-green-pairings} holds.  The convergence in
\eqref{eq:Wjk-convergence} and the estimate
\eqref{eq:green-coercive-estimate} justify every passage to the Runge limit.
\end{proof}

\begin{lemma}
\label{lem:green-trace-relation}
For every bounded interface functional \(F\in\mathcal T\),
\begin{equation}
  \mathcal G_1F\sim_{\Sigma_2}\mathcal G_2F.
  \label{eq:green-traces-equal}
\end{equation}
\end{lemma}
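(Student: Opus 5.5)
The plan is to show that \(D:=\mathcal G_1F-\mathcal G_2F\in N\) for every \(F\in\mathcal T\). Since \(N\) is a closed subspace of \(X\), Hahn--Banach gives \(N=\{z\in X: T(z)=0\ \text{for all } T\in N^\perp\}\), and \(N^\perp=\mathcal T\). So it suffices to prove
\[
H(\mathcal G_1F)=H(\mathcal G_2F)\qquad\text{for all } F,H\in\mathcal T.
\]
Lemma \ref{lem:compact-green-pairings} gives this identity only for compact known-side sources. The first step is therefore to move from compact sources to arbitrary interface functionals. I will do this in two stages, using the symmetry \eqref{eq:green-symmetry} so that only one source changes at a time.

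\emph{Stage one: \(F\in\mathcal T\), \(H\) a compact known-side source.} By symmetry, \(H(\mathcal G_jF)=F(\mathcal G_jH)\). So I must show \(F(\mathcal G_1H-\mathcal G_2H)=0\), which will follow once I know \(\mathcal G_1H-\mathcal G_2H\in N\).
\begin{itemize}
\item Test Lemma \ref{lem:compact-green-pairings} with a second compact source \(K\): \(K(\mathcal G_1H)=K(\mathcal G_2H)\). Densities in \(C_c^\infty(G)\) are among the allowed sources, so \(w:=\mathcal G_1H-\mathcal G_2H\) vanishes as a distribution on \(G\). Hence \(w=0\) a.e. in \(G\).
\item Since \(w\in H^1_0(\Omega_1)\) and \(w=0\) in \(G\), the trace of \(w|_{\Omega_2}\) on \(\Sigma_2\) equals the trace from the \(G\)-side, which is zero. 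On the remaining part of \(\partial\Omega_2\), which lies in \(\partial\Omega_1\), the trace also vanishes.
\end{itemize}
Thus \(w\in N\), and \(F(w)=0\) because \(F\in N^\perp\).

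\emph{Stage two: \(F,H\in\mathcal T\).} By stage one and symmetry, \(K(\mathcal G_1F-\mathcal G_2F)=0\) for every compact source \(K\). Taking \(K\) to be \(C_c^\infty(G)\) densities again gives \(\mathcal G_1F-\mathcal G_2F=0\) in \(G\). The same trace argument then places \(\mathcal G_1F-\mathcal G_2F\) in \(N\), which is exactly \eqref{eq:green-traces-equal}.

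So the argument really only needs \(C_c^\infty(G)\) sources. The offset surface sources \eqref{eq:offset-source} are not used here.

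\emph{Main obstacle.} The delicate step is the trace claim: a function in \(H^1_0(\Omega_1)\) that vanishes on \(G\) must have zero trace on \(\partial\Omega_2\). This requires that the quotient trace from the \(\Omega_2\) side on \(\Sigma_2\) coincide with the trace from the \(G\) side. That holds because \(H^1\) functions on the Lipschitz domain \(\Omega_1\) have no jump across the Lipschitz interface \(\Sigma_2\), since both \(G\) and \(\Omega_2\) are Lipschitz. On \(\partial\Omega_2\cap\partial\Omega_1\) the trace is zero by the \(H^1_0\) condition.

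I will also check that \(\partial\Omega_2\subset\Sigma_2\cup\partial\Omega_1\) up to a null set. This uses \(\overline\Omega_1=\overline G\cup\overline{\Omega_2}\), so no part of \(\partial\Omega_2\) inside \(\Omega_1\) fails to touch \(G\). Equivalently, I can argue that the zero extension of \(w|_{\Omega_2}\) is the function \(w\) itself. Since \(w\in H^1_0(\Omega_1)\) and vanishes on \(\Omega_1\setminus\overline{\Omega_2}\), the characterization of \(H^1_0\) on Lipschitz domains through zero extension then yields \(w|_{\Omega_2}\in H^1_0(\Omega_2)\), i.e.\ zero trace on all of \(\partial\Omega_2\).
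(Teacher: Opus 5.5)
Your proof is correct and is essentially the paper's own argument. You first show, via Lemma \ref{lem:compact-green-pairings} and \(C_c^\infty(G)\) densities, that \(\mathcal G_1H-\mathcal G_2H\) vanishes in \(G\), and hence lies in \(N\), for compact sources \(H\). You then use Green symmetry and \(F\in N^\perp\) to obtain \(H(\mathcal G_1F-\mathcal G_2F)=0\) for compact \(H\), and conclude with the same vanishing-in-\(G\) and trace step.

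The opening reduction to testing against all of \(\mathcal T\) is never used and can be dropped. Nonzero compact known-side sources are not elements of \(\mathcal T\), and Stage two concludes directly without it. Your zero-extension argument for the trace step is a good way to make precise the paper's brief appeal to the trace theorem.
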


\begin{proof}
We first take a compact known-side source \(F\), and put
\(w=\mathcal G_1F-\mathcal G_2F\in X\).  Lemma
\ref{lem:compact-green-pairings} gives \(H(w)=0\) for every compact known-side
source \(H\).  In particular this holds for every smooth bulk density compactly
supported in \(G\), so \(w=0\) in \(G\) as a distribution and hence in
\(H^1(G)\).  The trace theorem and the fact that \(w\) is one global \(H^1\)
function show that its quotient trace from \(\Omega_2\) vanishes on
\(\Sigma_2\).  Thus
\begin{equation}
  \mathcal G_1F\sim_{\Sigma_2}\mathcal G_2F
  \qquad\text{for compact known-side }F.
  \label{eq:compact-source-trace-equality}
\end{equation}

Now let \(F\in\mathcal T=N^\perp\) be arbitrary.  For every compact known-side
source \(H\), Green symmetry and \eqref{eq:compact-source-trace-equality} give
\begin{align}
 H(\mathcal G_1F-\mathcal G_2F)
 &=F(\mathcal G_1H-\mathcal G_2H)=0,
\end{align}
because the difference belongs to \(N\).  Testing again with all smooth bulk
densities compactly supported in \(G\) shows that
\(\mathcal G_1F-\mathcal G_2F=0\) in \(G\), and trace continuity gives
\eqref{eq:green-traces-equal}.  The first passage takes place in the
\(X\)--\(X^*\) duality, the vanishing is distributional in \(G\), and the last
passage uses the bounded trace map from \(H^1(G)\).  The quotient-dual
identification \(\mathcal T=N^\perp\) is the Hahn--Banach step; no approximation
of \(F\) by smooth full-interface densities has been made.
\end{proof}

For the next lemma, let \(u_j[z]\in H^1(\Omega_2)\) be the
\(\sigma^{(j)}\)-harmonic function with boundary trace equal to that of
\(z|_{\Omega_2}\), and let \(v^0[z]\in H^1(G)\) be the
\(\sigma^0\)-harmonic function with the boundary trace of \(z|_G\).  In
particular, the latter trace is zero on the outer part of \(\partial G\).
Existence, uniqueness, and the energy bounds are the standard Lipschitz-domain
Dirichlet theory; see \cite[Chapters 3--4]{McLean2000}.

\begin{lemma}
\label{lem:conormal-difference-functionals}
For \(z\in X\), define \(K_jz\in X^*\) by
\begin{equation}
\begin{split}
  (K_jz)(\phi)
  &=\int_{\Omega_2}\sigma^{(j)}\nabla u_j[z]\cdot\nabla\phi\,dx
    -\int_G^{(+)}\sigma^0\nabla v^0[z]\cdot\nabla\phi\,dx\\
  &=\int_{\Omega_2}\sigma^{(j)}\nabla u_j[z]\cdot\nabla\phi\,dx
    +\int_G\sigma^0\nabla v^0[z]\cdot\nabla\phi\,dx,
  \qquad \phi\in X.
  \label{eq:conormal-difference-functional}
\end{split}
\end{equation}
Here \(\int_G^{(+)}\) denotes the known-side Green form oriented with the
normal pointing out of \(\Omega_2\); it is the negative of the usual volume
energy on \(G\).  Thus the first line is ``inner conormal minus known-side
conormal'', while the second line is the corresponding volume formula.

The map \(K_j:X\to\mathcal T\) is bounded and depends only on the internal
trace class of \(z\).  It satisfies
\begin{equation}
  K_j\mathcal G_jF=F,
  \qquad F\in\mathcal T,
  \label{eq:conormal-left-inverse}
\end{equation}
as an equality in \(X^*\), and it is injective modulo internal trace:
\begin{equation}
  K_jz=K_jw\quad\Longrightarrow\quad z\sim_{\Sigma_2}w.
  \label{eq:conormal-trace-injectivity}
\end{equation}
\end{lemma}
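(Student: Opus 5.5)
The plan is to verify the four assertions directly from the definition \eqref{eq:conormal-difference-functional}, using only the Dirichlet theory on \(\Omega_2\) and on \(G\) together with one gluing fact. The gluing fact is: if \(a\in H^1(\Omega_2)\) and \(b\in H^1(G)\) have traces that agree on \(\Sigma_2\), and \(b\) has zero trace on the outer part of \(\partial G\) and \(a\) on \(\partial\Omega_2\cap\partial\Omega_1\), then the function equal to \(a\) on \(\Omega_2\) and \(b\) on \(G\) lies in \(X=H^1_0(\Omega_1)\). Conversely, for \(z\in X\) the traces of \(z|_{\Omega_2}\) and \(z|_G\) agree on \(\Sigma_2\).

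\emph{Boundedness and trace dependence.} First I will note that \(u_j[z]\) and \(v^0[z]\) depend only on the traces of \(z|_{\Omega_2}\) and \(z|_G\). By the gluing fact, the trace of \(z|_G\) is zero on the outer part of \(\partial G\) and equals the trace from \(\Omega_2\) on \(\Sigma_2\). Hence both functions depend only on the internal trace class of \(z\), and so does \(K_jz\). The Dirichlet energy bounds \(\|u_j[z]\|_{H^1(\Omega_2)}\le C\|z\|_X\) and \(\|v^0[z]\|_{H^1(G)}\le C\|z\|_X\), which follow from trace continuity and bounded right inverses, give \(\|K_jz\|_{X^*}\le C\|z\|_X\).

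To show \(K_jz\in\mathcal T=N^\perp\), take \(\phi\in N\). Then \(\phi|_{\Omega_2}\in H^1_0(\Omega_2)\), so the first integral vanishes because \(u_j[z]\) is \(\sigma^{(j)}\)-harmonic. By the gluing fact, \(\phi|_G\) also has zero trace on all of \(\partial G\), so the second integral vanishes because \(v^0[z]\) is \(\sigma^0\)-harmonic.

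\emph{Left inverse.} Let \(F\in\mathcal T\) and \(z=\mathcal G_jF\). Any \(\psi\in H^1_0(\Omega_2)\) or \(\psi\in H^1_0(G)\), extended by zero, lies in \(N\), so \(F(\psi)=0\). By \eqref{eq:variational-green}, \(z\) is therefore \(\sigma^{(j)}\)-harmonic in \(\Omega_2\) and, since \(\sigma^{(j)}=\sigma^0\) in \(G\), \(\sigma^0\)-harmonic in \(G\). Uniqueness for the Dirichlet problems then gives \(u_j[z]=z|_{\Omega_2}\) and \(v^0[z]=z|_G\). The second line of \eqref{eq:conormal-difference-functional} becomes \(\int_{\Omega_1}\sigma^{(j)}\nabla z\cdot\nabla\phi\,dx=F(\phi)\), which is \eqref{eq:conormal-left-inverse}.

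\emph{Injectivity modulo trace.} By linearity it suffices to show that \(K_jy=0\) implies \(y\sim_{\Sigma_2}0\). Define \(Y\) to be \(u_j[y]\) on \(\Omega_2\) and \(v^0[y]\) on \(G\). These two pieces have matching traces on \(\Sigma_2\), since both equal the trace of \(y\), and they vanish on the remaining boundary, so the gluing fact gives \(Y\in X\). Then \(0=(K_jy)(\phi)=\int_{\Omega_1}\sigma^{(j)}\nabla Y\cdot\nabla\phi\,dx\) for all \(\phi\in X\). Coercivity gives \(Y=0\), so \(u_j[y]=0\), and its trace, which is that of \(y|_{\Omega_2}\), vanishes.

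The main obstacle I expect is the gluing fact itself in the present generality. The domain \(\Omega_2\) need not be compactly contained in \(\Omega_1\), and \(\partial\Omega_2\) may meet \(\partial\Omega_1\) along a rim. I would handle it by the standard characterization that a piecewise-\(H^1\) function lies in \(H^1\) if and only if its interface traces match. This is checked by integrating by parts against \(C_c^\infty\) vector fields on the Lipschitz pieces, with the zero outer traces giving membership in \(H^1_0(\Omega_1)\) via McLean's Lipschitz-domain trace theory.
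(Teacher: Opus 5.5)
Your proposal is correct and follows the paper's proof essentially step for step. It uses energy bounds for boundedness, and restrictions of $\phi\in N$ as zero-boundary test functions to get $K_jz\in\mathcal T$. It identifies $u_j[U]=U|_{\Omega_2}$ and $v^0[U]=U|_G$ for the left inverse, and uses gluing plus coercivity for injectivity modulo trace. The gluing step you flag as the main obstacle is lighter than you expect. $Y-y$ is the sum of the zero extensions of $u_j[y]-y|_{\Omega_2}\in H^1_0(\Omega_2)$ and $v^0[y]-y|_G\in H^1_0(G)$, so $Y\in X$ directly. The converse trace matching needs only density of $C_c^\infty(\Omega_1)$ in $X$ and continuity of the one-sided traces.
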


\begin{proof}
The two Dirichlet energy estimates and the trace theorem give
\begin{equation}
  \|K_jz\|_{X^*}\le C\|z\|_X.
\end{equation}
Both Dirichlet solutions depend only on the
trace of \(z\), so the same is true of \(K_jz\).  If \(\phi\in N\), its trace
vanishes on the internal interface and on the outer boundary; using its
restrictions as zero-boundary test functions in the two homogeneous equations
shows \((K_jz)(\phi)=0\).  Hence \(K_jz\in N^\perp=\mathcal T\).

Let \(F\in\mathcal T\) and \(U=\mathcal G_jF\).  Since \(F\) annihilates
\(N\), testing with functions compactly supported on either side shows that
\(U\) is homogeneous in \(\Omega_2\) and in \(G\).  Its restrictions are
therefore exactly \(u_j[U]\) and \(v^0[U]\).  The volume form in
\eqref{eq:conormal-difference-functional} and
\eqref{eq:variational-green} now give
\begin{equation}
  (K_jU)(\phi)
  =\int_{\Omega_1}\sigma^{(j)}\nabla U\cdot\nabla\phi\,dx
  =F(\phi),
\end{equation}
which proves \eqref{eq:conormal-left-inverse}.

Finally suppose \(K_jz=0\).  Glue \(u_j[z]\) and \(v^0[z]\).  Their traces
match because both come from the global representative \(z\), so the glued
function belongs to \(X\).  Equation
\eqref{eq:conormal-difference-functional} says precisely that its weak global
conductivity form vanishes.  Uniqueness for the zero-boundary problem makes the
glued function zero, and hence \(z\sim_{\Sigma_2}0\).  Applying this to \(z-w\)
proves \eqref{eq:conormal-trace-injectivity}.
\end{proof}

\begin{lemma}
\label{lem:conormal-relations-equal}
For every \(z\in X\),
\begin{equation}
  K_1z=K_2z\quad\text{in }X^*.
  \label{eq:conormal-relations-equal}
\end{equation}
\end{lemma}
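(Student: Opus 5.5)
The plan is to use the two structural facts already available: Lemma \ref{lem:green-trace-relation}, which says that \(\mathcal G_1F\sim_{\Sigma_2}\mathcal G_2F\) for every \(F\in\mathcal T\), and Lemma \ref{lem:conormal-difference-functionals}, which says that \(K_j\) depends only on the internal trace class and is a left inverse of \(\mathcal G_j\) on \(\mathcal T\). The argument first proves the identity on the range of the Green operators restricted to \(\mathcal T\). It then shows that every \(z\in X\) is trace-equivalent to such a Green solution.

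\emph{Step 1 (Green solutions).} Fix \(F\in\mathcal T\) and put \(z_j=\mathcal G_jF\). By \eqref{eq:conormal-left-inverse}, \(K_1z_1=F=K_2z_2\). By Lemma \ref{lem:green-trace-relation}, \(z_1\sim_{\Sigma_2}z_2\). Since \(K_2\) depends only on the trace class, \(K_2z_1=K_2z_2=F=K_1z_1\). Hence \(K_1z=K_2z\) for every \(z\in\mathcal G_1(\mathcal T)\), and also for every \(z\) in the same trace class as an element of \(\mathcal G_1(\mathcal T)\).

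\emph{Step 2 (every trace class is attained).} Let \(z\in X\) be arbitrary and put \(F=K_1z\in\mathcal T\). Set \(w=\mathcal G_1F\). Then \(K_1w=F=K_1z\) by \eqref{eq:conormal-left-inverse}. The injectivity \eqref{eq:conormal-trace-injectivity} gives \(w\sim_{\Sigma_2}z\). Using trace-class invariance together with Step 1, we obtain \(K_2z=K_2w=K_1w=K_1z\). This is \eqref{eq:conormal-relations-equal}.

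I expect no serious obstacle, since all of the analytic work sits in the preceding lemmas. The one point to check is that each lemma is applied to the coefficient it was proved for. Lemma \ref{lem:conormal-difference-functionals} holds for each \(j\) separately, with \(v^0[z]\) common to both. Lemma \ref{lem:green-trace-relation} requires \(F\in\mathcal T\), and this holds in both steps because the functionals involved lie in the range of \(K_1\), which is contained in \(\mathcal T\) by that lemma. Finally, the trace-class invariance of \(K_2\) is an equality in \(X^*\), so the chain of equalities holds in \(X^*\) as claimed.
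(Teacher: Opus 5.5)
Your argument is correct and is the paper's proof. The paper likewise sets \(F=K_1z\in\mathcal T\) and uses \eqref{eq:conormal-left-inverse} with \eqref{eq:conormal-trace-injectivity} to get \(\mathcal G_1F\sim_{\Sigma_2}z\); it then applies Lemma~\ref{lem:green-trace-relation} together with the trace-class invariance of \(K_2\) and \(K_2\mathcal G_2F=F\) to conclude \(K_2z=F=K_1z\), and your Step~1 simply isolates the last part of this chain as a statement about \(\mathcal G_1(\mathcal T)\).
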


\begin{proof}
Fix \(z\in X\) and put \(F=K_1z\in\mathcal T\).  The left-inverse identity
gives \(K_1\mathcal G_1F=F=K_1z\); injectivity modulo trace therefore yields
\(\mathcal G_1F\sim_{\Sigma_2}z\).  Lemma \ref{lem:green-trace-relation} gives
\(\mathcal G_2F\sim_{\Sigma_2}\mathcal G_1F\), and hence
\(\mathcal G_2F\sim_{\Sigma_2}z\).  Since \(K_2\) depends only on the trace
class and \(K_2\mathcal G_2F=F\), we conclude that
\(K_2z=F=K_1z\).
\end{proof}

\begin{lemma}
\label{lem:selected-patch-from-conormal}
If \eqref{eq:conormal-relations-equal} holds, then
\eqref{eq:inner-dn-equality} holds on the selected flat patch \(\Gamma_2\).
\end{lemma}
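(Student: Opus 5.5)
Since \(K_1z=K_2z\) for every \(z\in X\), the plan is to evaluate these functionals on suitable global representatives and recover the weak inner DN form of \eqref{eq:local-dn} on \(\Gamma_2\). Fix \(f,h\in C_c^\infty(\Gamma_2)\); by the definition of \(H^{1/2}_0(\Gamma_2;\partial\Omega_2)\) these generate the trace space. Since \(\Gamma_2\) lies in the relative interior of one flat face of \(\Sigma_2\subset\Omega_1\), away from \(\partial\Omega_1\) and from the skeleton, I will build global lifts \(z_f,z_h\in X=H^1_0(\Omega_1)\). Concretely, in flat coordinates with the face at \(x_n=0\), set \(z_f(x',x_n)=f(x')\chi(x_n)\) with a cutoff \(\chi\) supported near \(0\), and then multiply by a cutoff supported in a small ball around \(\supp f\) that stays inside \(\Omega_1\). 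The internal trace of \(z_f\) on \(\partial\Omega_2\) is then \(f\) extended by zero, and its trace on \(\partial G\) is \(f\) on \(\Gamma_2\) and zero elsewhere, including the outer part.

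Next I evaluate \((K_jz_f)(z_h)\) using the volume form \eqref{eq:conormal-difference-functional}. By construction \(u_j[z_f]\) is exactly the solution \(u_f\) in \(\Omega_2\) with trace \(f\), and \(z_h|_{\Omega_2}\) is an admissible extension of \(h\). Therefore the first term equals \(\langle\DN^{\Omega_2}_{\sigma^{(j)},\Gamma_2}f,h\rangle\). The second term, \(\int_G\sigma^0\nabla v^0[z_f]\cdot\nabla z_h\,dx\), involves only the known coefficient \(\sigma^0\) in \(G\) and the data \(f,h\), so it is the same for \(j=1,2\). Subtracting the identity \((K_1z_f)(z_h)=(K_2z_f)(z_h)\) then gives
\(\langle\DN^{\Omega_2}_{\sigma^{(1)},\Gamma_2}f,h\rangle=\langle\DN^{\Omega_2}_{\sigma^{(2)},\Gamma_2}f,h\rangle\) for all smooth \(f,h\) compactly supported in \(\Gamma_2\).

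Finally I extend this equality by density. Both sides are bounded bilinear forms on \(H^{1/2}_0(\Gamma_2;\partial\Omega_2)\times H^{1/2}_0(\Gamma_2;\partial\Omega_2)\), by the energy estimate for the Dirichlet problem in \(\Omega_2\) and a bounded right inverse of the trace. Smooth compactly supported functions are dense by definition, so the forms agree, and this is \eqref{eq:inner-dn-equality}.

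The main obstacle is the first step: I must check that the lift genuinely lies in \(H^1_0(\Omega_1)\), and that its trace from the \(\Omega_2\) side is \(f\) on \(\Gamma_2\) and zero on the rest of \(\partial\Omega_2\). This works only because \(\Gamma_2\) is a flat patch inside \(\Omega_1\) with \(\Omega_2\) and \(G\) on opposite sides locally. If \(\supp f\) came near an edge or the outer boundary, the cutoff could not be chosen this way, and that is exactly why the selected patch is required to lie in the relative interior of a single flat face. A secondary point is that \(u_j[z_f]\) depends only on the trace, which Lemma \ref{lem:conormal-difference-functionals} already provides, so no extension-independence issue arises.
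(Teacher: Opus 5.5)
Your proposal is correct and follows essentially the same route as the paper: individual lifts \(z_f,z_h\in H^1_0(\Omega_1)\) of smooth data compactly supported in \(\Gamma_2\), evaluation of \(K_1z_f=K_2z_f\) on \(z_h\) so that the \(j\)-independent known-side term cancels and the \(\Omega_2\)-term is the weak DN pairing, and extension to \(H^{1/2}_0(\Gamma_2;\partial\Omega_2)\) by the energy bound and the defining completion. Your explicit product-cutoff lift in flat coordinates is just a concrete version of the paper's ``cutoff in a graph chart''.
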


\begin{proof}
Take \(p,q\in C_c^\infty(\Gamma_2)\).  Because each support is compactly
contained in one flat face and avoids the skeleton, a cutoff in a graph chart
gives individual lifts \(z_p,z_q\in X\) whose traces on
\(\partial\Omega_2\) equal \(p,q\), respectively, and vanish off the selected
patch.  No simultaneous extension of an arbitrary full-interface trace is
needed.

Apply \eqref{eq:conormal-relations-equal} to \(z_p\) and test the resulting
global functionals on \(z_q\).  The known-side terms in
\eqref{eq:conormal-difference-functional} cancel because their coefficient and
Dirichlet solutions are fixed.  The remaining identity is
\begin{equation}
  \bigl\langle\DN^{\Omega_2}_{\sigma^{(1)},\Gamma_2}p,q\bigr\rangle
  =
  \bigl\langle\DN^{\Omega_2}_{\sigma^{(2)},\Gamma_2}p,q\bigr\rangle .
  \label{eq:smooth-selected-patch-dn}
\end{equation}
The Dirichlet energy estimate bounds either side by
\begin{equation}
  C\|p\|_{H^{1/2}(\partial\Omega_2)}
    \|q\|_{H^{1/2}(\partial\Omega_2)}.
\end{equation}
First approximate the input and then the test trace by smooth functions compactly
supported in \(\Gamma_2\).  The defining completion of
\(H^{1/2}_0(\Gamma_2;\partial\Omega_2)\) extends
\eqref{eq:smooth-selected-patch-dn} to the whole selected-patch space.
\end{proof}

\begin{proof}[Proof of Proposition \ref{prop:inner-extension}]
Lemma \ref{lem:compact-green-pairings} follows from the outer DN identity and
Runge approximation.  Lemmas \ref{lem:green-trace-relation}--
\ref{lem:conormal-relations-equal} then identify the two conormal relations on
the quotient traces of global \(H^1_0(\Omega_1)\) representatives.  Lemma
\ref{lem:selected-patch-from-conormal} gives the asserted equality of the local
DN maps on \(\Gamma_2\).
\end{proof}

\section*{Acknowledgements}
The author was supported by NSTC grant 113-2115-M-A49-018-MY3.

\bibliographystyle{amsplain}
\bibliography{paper_refs}

@misc{AlessandriniDeHoopGaburroSincich2016,
  author = {Alessandrini, G. and de Hoop, M. V. and Gaburro, R. and Sincich, E.},
  title  = {Lipschitz stability for the electrostatic inverse boundary value problem with piecewise linear conductivities},
  year   = {2016},
  note   = {Journal de Math{\'e}matiques Pures et Appliqu{\'e}es, doi:10.1016/j.matpur.2016.10.001},
}

@article{AlessandriniVessella2005,
  author  = {Alessandrini, G. and Vessella, S.},
  title   = {Lipschitz stability for the inverse conductivity problem},
  journal = {Advances in Applied Mathematics},
  volume  = {35},
  year    = {2005},
  number  = {2},
  pages   = {207--241},
}

@misc{BerettaDeHoopQiu2012,
  author        = {Beretta, E. and de Hoop, M. V. and Qiu, L.},
  title         = {Lipschitz stability of an inverse boundary value problem for a {Schr{\"o}dinger} type equation},
  year          = {2012},
  eprint        = {1203.1650},
  archivePrefix = {arXiv},
  primaryClass  = {math.AP},
  note          = {arXiv:1203.1650},
}

@misc{BerettaDeHoopQiuScherzer2014,
  author        = {Beretta, E. and de Hoop, M. V. and Qiu, L. and Scherzer, O.},
  title         = {Inverse boundary value problem for the {Helmholtz} equation: multi-level approach and iterative reconstruction},
  year          = {2014},
  eprint        = {1406.2391},
  archivePrefix = {arXiv},
  primaryClass  = {math.NA},
  note          = {arXiv:1406.2391},
}

@misc{BerettaFrancini2011,
  author = {Beretta, E. and Francini, E.},
  title  = {Lipschitz stability for the electrical impedance tomography problem: the complex case},
  year   = {2011},
  note   = {Communications in Partial Differential Equations, 36(10), doi:10.1080/03605302.2011.552930; arXiv:1008.4046},
}

@misc{BerettaFranciniVessella2013,
  author        = {Beretta, E. and Francini, E. and Vessella, S.},
  title         = {Uniqueness and {Lipschitz} stability for the identification of {Lam{\'e}} parameters from boundary measurements},
  year          = {2013},
  eprint        = {1303.2443},
  archivePrefix = {arXiv},
  primaryClass  = {math.AP},
  note          = {arXiv:1303.2443},
}

@misc{CarsteaNakamuraOksanen2019,
  author = {C{\^a}rstea, C. I. and Nakamura, G. and Oksanen, L.},
  title  = {Uniqueness for the inverse boundary value problem of piecewise homogeneous anisotropic elasticity in the time domain},
  year   = {2019},
  note   = {arXiv:1903.01178},
}

@article{GaburroSincich2015,
  author  = {Gaburro, R. and Sincich, E.},
  title   = {Lipschitz stability for the inverse conductivity problem for a conformal class of anisotropic conductivities},
  journal = {Inverse Problems},
  volume  = {31},
  year    = {2015},
  number  = {1},
  pages   = {015008},
}

@article{Garde2020,
  author  = {Garde, H.},
  title   = {Reconstruction of piecewise constant layered conductivities in electrical impedance tomography},
  journal = {Communications in Partial Differential Equations},
  volume  = {45},
  year    = {2020},
  number  = {9},
  pages   = {1118--1133},
  note    = {doi:10.1080/03605302.2020.1760884; arXiv:1904.07775},
}

@article{Isakov1988,
  author  = {Isakov, V.},
  title   = {On uniqueness of recovery of a discontinuous conductivity coefficient},
  journal = {Communications on Pure and Applied Mathematics},
  volume  = {41},
  year    = {1988},
  number  = {7},
  pages   = {865--877},
}

@article{KohnVogelius1984,
  author  = {Kohn, R. V. and Vogelius, M.},
  title   = {Determining conductivity by boundary measurements},
  journal = {Communications on Pure and Applied Mathematics},
  volume  = {37},
  year    = {1984},
  number  = {3},
  pages   = {289--298},
}

@article{KohnVogelius1985,
  author  = {Kohn, R. V. and Vogelius, M.},
  title   = {Determining conductivity by boundary measurements. {II}. Interior results},
  journal = {Communications on Pure and Applied Mathematics},
  volume  = {38},
  year    = {1985},
  number  = {5},
  pages   = {643--667},
}

@article{AlessandriniDeHoopGaburro2017,
  author  = {Alessandrini, G. and de Hoop, M. V. and Gaburro, R.},
  title   = {Uniqueness for the electrostatic inverse boundary value problem with piecewise constant anisotropic conductivities},
  journal = {Inverse Problems},
  volume  = {33},
  year    = {2017},
  number  = {12},
  pages   = {125013},
}

@article{AstalaPaivarinta2006,
  author  = {Astala, K. and P{\"a}iv{\"a}rinta, L.},
  title   = {Calderon's inverse conductivity problem in the plane},
  journal = {Annals of Mathematics (2)},
  volume  = {163},
  year    = {2006},
  number  = {1},
  pages   = {265--299},
}

@article{BrownUhlmann1997,
  author  = {Brown, R. M. and Uhlmann, G. A.},
  title   = {Uniqueness in the inverse conductivity problem for nonsmooth conductivities in two dimensions},
  journal = {Communications in Partial Differential Equations},
  volume  = {22},
  year    = {1997},
  number  = {5--6},
  pages   = {1009--1027},
}

@incollection{Calderon1980,
  author    = {Calder{\'o}n, A. P.},
  title     = {On an inverse boundary value problem},
  booktitle = {Seminar on Numerical Analysis and its Applications to Continuum Physics},
  address   = {Rio de Janeiro},
  publisher = {Sociedade Brasileira de Matem{\'a}tica},
  year      = {1980},
  pages     = {65--73},
}

@article{CarsteaHondaNakamura2018,
  author  = {C{\^a}rstea, C. I. and Honda, N. and Nakamura, G.},
  title   = {Uniqueness in the inverse boundary value problem for piecewise homogeneous anisotropic elasticity},
  journal = {SIAM Journal on Mathematical Analysis},
  volume  = {50},
  year    = {2018},
  number  = {3},
  pages   = {3291--3302},
}

@article{Ikehata2002,
  author  = {Ikehata, M.},
  title   = {Reconstruction of inclusion from boundary measurements},
  journal = {Journal of Inverse and Ill-Posed Problems},
  volume  = {10},
  year    = {2002},
  number  = {1},
  pages   = {37--65},
}

@book{John1982,
  author    = {John, F.},
  title     = {Partial Differential Equations},
  edition   = {Fourth},
  series    = {Applied Mathematical Sciences},
  volume    = {1},
  publisher = {Springer-Verlag},
  address   = {New York},
  year      = {1982},
}

@book{Evans2010,
  author    = {Evans, Lawrence C.},
  title     = {Partial Differential Equations},
  edition   = {Second},
  series    = {Graduate Studies in Mathematics},
  volume    = {19},
  publisher = {American Mathematical Society},
  address   = {Providence, RI},
  year      = {2010},
}

@book{Hormander1983,
  author    = {H{\"o}rmander, Lars},
  title     = {The Analysis of Linear Partial Differential Operators {I}: Distribution Theory and Fourier Analysis},
  series    = {Grundlehren der mathematischen Wissenschaften},
  volume    = {256},
  publisher = {Springer-Verlag},
  address   = {Berlin},
  year      = {1983},
}

@book{McLean2000,
  author    = {McLean, William},
  title     = {Strongly Elliptic Systems and Boundary Integral Equations},
  publisher = {Cambridge University Press},
  address   = {Cambridge},
  year      = {2000},
}

@article{KangYun2003,
  author  = {Kang, H. and Yun, K.},
  title   = {Boundary determination of conductivities and {Riemannian} metrics via local {Dirichlet-to-Neumann} operator},
  journal = {SIAM Journal on Mathematical Analysis},
  volume  = {34},
  year    = {2003},
  number  = {3},
  pages   = {719--735},
}

@article{LassasUhlmann2001,
  author  = {Lassas, M. and Uhlmann, G.},
  title   = {On determining a {Riemannian} manifold from the {Dirichlet-to-Neumann} map},
  journal = {Annales scientifiques de l'{\'E}cole Normale Sup{\'e}rieure (4)},
  volume  = {34},
  year    = {2001},
  number  = {5},
  pages   = {771--787},
}

@article{LeeUhlmann1989,
  author  = {Lee, J. M. and Uhlmann, G.},
  title   = {Determining anisotropic real-analytic conductivities by boundary measurements},
  journal = {Communications on Pure and Applied Mathematics},
  volume  = {42},
  year    = {1989},
  number  = {8},
  pages   = {1097--1112},
}

@article{Nachman1988,
  author  = {Nachman, A. I.},
  title   = {Reconstructions from boundary measurements},
  journal = {Annals of Mathematics (2)},
  volume  = {128},
  year    = {1988},
  number  = {3},
  pages   = {531--576},
}

@article{Sylvester1990,
  author  = {Sylvester, J.},
  title   = {An anisotropic inverse boundary value problem},
  journal = {Communications on Pure and Applied Mathematics},
  volume  = {43},
  year    = {1990},
  number  = {2},
  pages   = {201--232},
}

@article{SylvesterUhlmann1987,
  author  = {Sylvester, J. and Uhlmann, G.},
  title   = {A global uniqueness theorem for an inverse boundary value problem},
  journal = {Annals of Mathematics (2)},
  volume  = {125},
  year    = {1987},
  number  = {1},
  pages   = {153--169},
}

@article{Uhlmann2009,
  author  = {Uhlmann, G.},
  title   = {Electrical impedance tomography and {Calderon's} problem},
  journal = {Inverse Problems},
  volume  = {25},
  year    = {2009},
  number  = {12},
  pages   = {123011},
}

@misc{Carstea2026HolderStability,
  author = {C{\^a}rstea, C. I.},
  title  = {H{\"o}lder Stability from Exact Uniqueness for Finite-Dimensional Analytic Inverse Problems},
  year   = {2026},
  note   = {arXiv:2605.06354},
}

\end{document}